\definecolor{blue}{rgb}{0,0,0.9}
\definecolor{red}{rgb}{0.9,0,0}
\definecolor{green}{rgb}{0,0.9,0}
\def\e{\bm{e}}
\begin{document}

\title{A Fast Globally Linearly Convergent Algorithm for the Computation of Wasserstein Barycenters}

\author{\name Lei Yang \email matylei@nus.edu.sg \\
        \addr Department of Mathematics  \\
              National University of Singapore \\
              10 Lower Kent Ridge Road, Singapore 119076
        \AND
        \name Jia Li \email jiali@stat.psu.edu \\
        \addr Department of Statistics \\
              Pennsylvania State University \\
              University Park, PA 16802, USA
        \AND
        \name Defeng Sun \email defeng.sun@polyu.edu.hk \\
        \addr Department of Applied Mathematics \\
              The Hong Kong Polytechnic University \\
              Hung Hom, Kowloon, Hong Kong
        \AND
        \name Kim-Chuan Toh \email mattohkc@nus.edu.sg \\
        \addr Department of Mathematics and Institute of Operations Research and Analytics \\
              National University of Singapore \\
              10 Lower Kent Ridge Road, Singapore 119076
}

\maketitle

\begin{abstract}
We consider the problem of computing a Wasserstein barycenter for a set of discrete probability distributions with finite supports, which finds many applications in areas such as statistics, machine learning and image processing. When the support points of the barycenter are pre-specified, this problem can be modeled as a linear programming (LP) problem whose size can be extremely large. To handle this large-scale LP, we analyse the structure of its dual problem, which is conceivably more tractable and can be reformulated as a well-structured convex problem with 3 kinds of block variables and a coupling linear equality constraint. We then adapt a symmetric Gauss-Seidel based alternating direction method of multipliers (sGS-ADMM) to solve the resulting dual problem and establish its global convergence and global linear convergence rate. As a critical component for efficient computation, we also show how all the subproblems involved can be solved exactly and efficiently. This makes our method suitable for computing a Wasserstein barycenter on a large-scale data set, without introducing an entropy regularization term as is commonly practiced. In addition, our sGS-ADMM can be used as a subroutine in an alternating minimization method to compute a barycenter when its support points are not pre-specified. Numerical results on synthetic data sets and image data sets demonstrate that our method is highly competitive for solving large-scale Wasserstein barycenter problems, in comparison to two existing representative methods and the commercial software Gurobi.
\end{abstract}

\begin{keywords}
Wasserstein barycenter, discrete probability distribution, semi-proximal ADMM, symmetric Gauss-Seidel
\end{keywords}

\section{Introduction}\label{secintro}

In this paper, we consider the problem of computing the mean of a set of discrete probability distributions under the Wasserstein distance (also known as the optimal transport distance \textit{or} the earth mover's distance). This mean, called the Wasserstein barycenter, is also a discrete probability distribution \citep{ac2011barycenters}. Recently, the Wasserstein barycenter has attracted much attention due to its promising performance in many application areas such as data analysis and statistics \citep{bk2012consistent}, machine learning \citep{cd2014fast,lw2008real,yl2014scaling,ywwl2017fast} and image processing \citep{rpdb2011wasserstein}. For a set of discrete probability distributions with finite support points, a Wasserstein barycenter with its support points being pre-specified can be computed by solving a linear programming (LP) problem \citep{abm2016discrete}. However, the problem size can be extremely large when the number of discrete distributions \textit{or} the number of support points of each distribution is large. Thus,  classical LP methods such as the simplex method and the interior point method are no longer efficient enough or consume too much memory when solving this problem. This motivates the study of fast algorithms for the computation of Wasserstein barycenters; see, for example, \citep{bccnp2015iterative,borgwardt2019an,borgwardt2020improved,coo2015numerical,ccs2018stochastic,cd2014fast,cp2016a,or2015an,s2016stabilized,sgpc2015convolutional,uddgn2018distributed,xwwz2018fast,yl2014scaling,ywwl2017fast}.

One representative approach is to introduce an entropy regularization in the LP and then apply some efficient first-order methods, e.g., the gradient descent method \citep{cd2014fast} and the iterative Bregman projection (IBP) method \citep{bccnp2015iterative}, to solve the regularized problem. These methods can be implemented efficiently and hence are suitable for large-scale data sets. However, they can only return an approximate solution of the LP (due to the entropy regularization) and often suffer from numerical instabilities and very slow convergence speed when the regularization parameter becomes small. The numerical issue can be alleviated by performing some stabilization techniques (e.g., the \textit{log-sum-exp} technique) at the expense of losing some computational efficiency, but the slow speed may not be avoided. Thus, IBP is highly efficient if a rough approximate solution is adequate, as is the case in many learning tasks. However, our aim here is to obtain a high precision solution efficiently. Detailed empirical studies on the pros and cons of IBP are provided by \cite{ywwl2017fast}, specifically, in the scenario when the regularization parameter is reduced to obtain higher precision solutions. It was found that numerical difficulties often occur and the computational efficiency is lost when driving the regularization parameter to smaller values for obtaining more accurate solutions. We will also provide a comparison with IBP in our experiments. Another approach is to consider the LP as a constrained convex optimization problem with a separable structure and then apply some splitting methods to solve it. For example, the alternating direction method of multipliers (ADMM) was adapted in \citep{yl2014scaling}. However, solving the quadratic programming subproblems involved is still highly expensive. Later, \cite{ywwl2017fast} developed a modified Bregman ADMM (BADMM) based on the original one \citep{wb2014bregman} to solve the LP. In this method, all subproblems have closed-form solutions and hence can be solved efficiently. Promising numerical performance was also reported in \citep{ywwl2017fast}. However, this modified Bregman ADMM does not have a convergence guarantee so far.

In this paper, we also consider the LP as a constrained convex problem with multiple blocks of variables and develop an efficient method to solve its dual LP \textit{without} introducing the entropy regularization to modify the objective function. Here, we should mention that although introducing the entropy regularization can give a certain `smooth' solution (that may be favorable in some learning tasks) and lead to the efficient method IBP, it also introduces some blurring in transport plans (see Figure \ref{1dbarycenters}(b)), which may be undesirable in many other applications. The blurred transport plan can be hard to use for other purposes, for example, the recovery of non-mass splitting transport plan \citep{borgwardt2019an}. In contrast, as discussed in \citep{borgwardt2019an}, an exact barycenter computed from the non-regularized LP can have several favorable properties. Therefore, we believe it is important to have an efficient algorithm that can faithfully solve the original LP.  Moreover, the non-regularization-based method can naturally avoid the numerical issues caused by the entropy regularization and thus it is numerically more stable.

Our method is a convergent 3-block ADMM that is designed based on recent progresses in research on convergent multi-block ADMM-type methods for solving convex composite conic programming; see \citep{cst2017efficient,lst2016schur}. It is well known that the classical ADMM was originally proposed to solve a convex problem that contains 2 blocks of variables and a coupling linear equality constraint \citep{gm1976a,gm1975sur}. Specifically, consider
\begin{equation}\label{admmmodel}
\min\limits_{\bm{x}_1\in\mathbb{R}^{n_1},\,\bm{x}_2\in\mathbb{R}^{n_2}} ~f_1(\bm{x}_1) + f_2(\bm{x}_2) \quad \mathrm{s.t.} \quad \mathcal{A}_1(\bm{x}_1) + \mathcal{A}_2(\bm{x}_2) = \bm{b},
\end{equation}
where $f_1:\mathbb{R}^{n_1}\to(-\infty, \infty]$ and $f_2:\mathbb{R}^{n_2}\to(-\infty, \infty]$ are proper closed convex functions, $\mathcal{A}_1:\mathbb{R}^{n_1} \rightarrow \mathbb{R}^{m}$ and $\mathcal{A}_2:\mathbb{R}^{n_2} \rightarrow \mathbb{R}^{m}$ are linear operators, $\bm{b}\in\mathbb{R}^m$ is a given vector. The iterative scheme of the ADMM for solving problem \eqref{admmmodel} is given as follows:
\begin{equation*}
\left\{\begin{aligned}
&\bm{x}_1^{k+1} \in \arg\min
\limits_{\bm{x}_1\in\mathbb{R}^{n_1}}\left\{\mathcal{L}_{\beta}(\bm{x}_1,\,\bm{x}_2^k,\,\bm{\lambda}^k)\right\},\\
&\bm{x}_2^{k+1} \in \arg\min
\limits_{\bm{x}_2\in\mathbb{R}^{n_2}}\left\{\mathcal{L}_{\beta}(\bm{x}_1^{k+1},\,\bm{x}_2,\,\bm{\lambda}^k)\right\},\\
&\bm{\lambda}^{k+1} = \bm{\lambda}^k + \tau\beta\big(\mathcal{A}_1(\bm{x}_1^{k+1})+\mathcal{A}_2(\bm{x}_2^{k+1})-\bm{b}\big),
\end{aligned}\right.
\end{equation*}
where $\tau \in (0,\frac{\sqrt{5}+1}2)$ is the dual step-size and $\mathcal{L}_{\beta}$ is the augmented Lagrangian function for \eqref{admmmodel} defined as
\begin{equation*}
\mathcal{L}_{\beta}(\bm{x}_1,\,\bm{x}_2,\,\bm{\lambda}) := f_1(\bm{x}_1) + f_2(\bm{x}_2) + \langle \bm{\lambda}, \,\mathcal{A}_1(\bm{x}_1) + \mathcal{A}_2(\bm{x}_2) - \bm{b} \rangle
+{\textstyle\frac{\beta}{2}}\|\mathcal{A}_1(\bm{x}_1) + \mathcal{A}_2(\bm{x}_2) - \bm{b}\|^2
\end{equation*}
with $\beta>0$ being the penalty parameter. Under some mild conditions, the sequence $\{(\bm{x}_1^k, \,\bm{x}_2^k)\}$ generated by the above scheme can be shown to converge to an optimal solution of problem \eqref{admmmodel}. The above 2-block ADMM can be simply extended to a multi-block ADMM of the sequential Gauss-Seidel order for solving a convex problem with more than 2 blocks of variables. However, it has been shown in \citep{chyy2016the} that such a  directly extended ADMM may not converge when applied to a  problem with 3 \textit{or} more blocks of variables. This has motivated many researchers to develop various convergent variants of the ADMM for convex problems with more than 2 blocks of variables; see, for example, \citep{cst2017efficient,clst2018equivalence,hty2012alternating,lst2015a,lst2016schur,sty2015convergent}. Among them, the Schur complement based convergent semi-proximal ADMM (sPADMM) was proposed by \cite{lst2016schur} to solve a large class of linearly constrained convex problems with multiple blocks of variables, whose objective can be the sum of two proper closed convex functions and a finite number of convex quadratic or linear functions. This method modified the original ADMM by performing one more \textit{forward Gauss-Seidel sweep} after updating the block of variables corresponding to the nonsmooth function in the objective. With this novel strategy, \cite{lst2016schur} showed that their method can be reformulated as a 2-block sPADMM with specially designed semi-proximal terms and its convergence is thus guaranteed from that of the 2-block sPADMM; see \citep[Appendix B]{fpst2013hankel}. Later, this method was generalized to the inexact symmetric Gauss-Seidel based ADMM (sGS-ADMM) for more general convex problems \citep{cst2017efficient,lst2018qsdpnal}. The numerical results reported in \citep{cst2017efficient,lst2016schur,lst2018qsdpnal} also showed that the sGS-ADMM always performs much better than the possibly non-convergent directly extended ADMM. In addition, as the sGS-ADMM is equivalent to a 2-block sPADMM with specially designed proximal terms, the linear convergence rate of the sGS-ADMM can also be derived based on the linear convergence rate of the 2-block sPADMM under some mild conditions; more details can be found in \citep[Section 4.1]{hsz2017linear}.

Motivated by the above studies, in this paper, we adapt the sGS-ADMM to compute a Wasserstein barycenter by solving the dual problem of the original primal LP. The contributions of this paper are listed as follows:
\begin{itemize}
\item[1.] We derive the dual problem of the original primal LP and characterize the properties of their optimal solutions; see Proposition \ref{existopt}. The resulting dual problem is our target problem, which is reformulated as a linearly constrained convex problem containing 3 blocks of variables with a carefully delineated separable structure designed for efficient computations. We should emphasize again that we do not introduce the entropic or quadratic regularization to modify the LP so as to make it computationally more tractable. This is in contrast to many existing works that primarily focus on solving an approximation of the original LP arising from optimal transport related problems; see, for example, \citep{bccnp2015iterative,c2013sinkhorn,cd2014fast,dpr2018regularized,es2018quadratically}.

\item[2.] We apply the sGS-ADMM to solve the resulting dual problem and analyze its global convergence as well as its global linear convergence rate without any condition; see Theorems \ref{conthm} and \ref{thmconrate}. As a critical component of the paper, we also develop essential numerical strategies to show how all the subproblems in our method can be solved efficiently and that the subproblems at each step can be computed in parallel. This makes our sGS-ADMM highly suitable for computing Wasserstein barycenters on a large-scale data set.

\item[3.] We conduct rigorous numerical experiments on synthetic data sets and image data sets to evaluate the performance of our sGS-ADMM in comparison to existing state-of-the-art methods (IBP and BADMM) and the highly powerful commercial solver Gurobi. The computational results show that our sGS-ADMM is highly competitive compared to IBP and BADMM, and is also able to outperform Gurobi in terms of the computational time for solving large-scale LPs arising from Wasserstein barycenter problems.

\end{itemize}

The rest of this paper is organized as follows. In Section \ref{sectpro}, we describe the basic problem of computing Wasserstein barycenters and derive its dual problem. In Section \ref{sectalg}, we adapt the sGS-ADMM to solve the resulting dual problem and present the efficient implementations for each step that are crucial in making our method competitive. The convergence analysis of the sGS-ADMM is presented in Section \ref{sectcon}. Finally, numerical results are presented in Section \ref{sectnum}, with some concluding remarks given in Section \ref{secconc}.

\vspace{2mm}
\textit{\textbf{Notation and Preliminaries.}} In this paper, we present scalars, vectors and matrices in lower case letters, bold lower case letters and upper case letters, respectively. We use $\mathbb{R}$, $\mathbb{R}^n$, $\mathbb{R}^n_+$ and $\mathbb{R}^{m\times n}$ to denote the set of real numbers, $n$-dimensional real vectors, $n$-dimensional real vectors with nonnegative entries and $m\times n$ real matrices, respectively. For a vector $\bm{x}$, $x_i$ denotes its $i$-th entry, $\|\bm{x}\|$ denotes its Euclidean norm, $\|\bm{x}\|_p$ denotes its $\ell_p$-norm ($p\geq1$) defined by $\|\bm{x}\|_p:=\left( \sum_{i=1}^{n} |x_{i}|^p \right)^{\frac{1}{p}}$ and $\|\bm{x}\|_T:=\sqrt{\langle\bm{x},\,T\bm{x}\rangle}$ denotes its weighted norm associated with the symmetric positive semidefinite matrix $T$. For a matrix $X$, $x_{ij}$ denotes its $(i,j)$-th entry, $X_{i:}$ denotes its $i$-th row, $X_{:j}$ denotes its $j$-th column, $\|X\|_F$ denotes its Fr\"{o}benius norm and $\mathrm{vec}(X)$ denotes the vectorization of $X$. We also use $\bm{x}\geq0$ and $X \geq 0$ to denote $x_i\geq0$ for all $i$ and $x_{ij}\geq 0$ for all $(i,j)$. The identity matrix of size $n \times n$ is denoted by $I_n$. For any $X_1\in\mathbb{R}^{m \times n_1}$ and $X_2\in\mathbb{R}^{m \times n_2}$, $[X_1, X_2]\in\mathbb{R}^{m \times (n_1+n_2)}$ denotes the matrix obtained by horizontally concatenating $X_1$ and $X_2$. For any $Y_1\in\mathbb{R}^{m_1 \times n}$ and $Y_2\in\mathbb{R}^{m_2 \times n}$, $[Y_1; Y_2]\in\mathbb{R}^{(m_1+m_2) \times n}$ denotes the matrix obtained by vertically concatenating $Y_1$ and $Y_2$. For any $X\in\mathbb{R}^{m \times n}$ and $Y\in\mathbb{R}^{m' \times n'}$, the Kronecker product $X\otimes Y$ is defined as
\begin{equation*}
X\otimes Y =
\begin{bmatrix*}[c]
x_{11}Y   & \cdots & x_{1n}Y  \\
\vdots    &        & \vdots     \\
x_{m1}Y   & \cdots & x_{mn}Y \\
\end{bmatrix*}.
\end{equation*}

For an extended-real-valued function $f: \mathbb{R}^{n} \rightarrow [-\infty,\infty]$, we say that it is \textit{proper} if $f(\bm{x}) > -\infty$ for all $\bm{x} \in \mathbb{R}^{n}$ and its domain ${\rm dom}\,f:=\{\bm{x} \in \mathbb{R}^{n} : f(\bm{x}) < \infty\}$ is nonempty. A proper function $f$ is said to be closed if it is lower semicontinuous. Assume that $f: \mathbb{R}^{n} \rightarrow (-\infty,\infty]$ is a proper and closed convex function. The subdifferential of $f$ at $\bm{x}\in{\rm dom}\,f$ is defined by $\partial f(\bm{x}):=\{\bm{d}\in\mathbb{R}^n: f(\bm{y}) \geq f(\bm{x}) + \langle \bm{d}, \,\bm{y}-\bm{x} \rangle, ~\forall\,\bm{y}\in\mathbb{R}^n\}$ and its conjugate function $f^*: \mathbb{R}^{n} \rightarrow (-\infty,\infty]$ is defined by $f^*(\bm{y}):=\sup\{\langle \bm{y},\,\bm{x}\rangle-f(\bm{x}) : \bm{x}\in\mathbb{R}^n\}$. For any $\bm{x}$ and $\bm{y}$, it follows from \citep[Theorem 23.5]{r1970convex} that
\begin{equation}\label{subeqv}
\bm{y} \in \partial f(\bm{x}) ~~\Longleftrightarrow ~~\bm{x} \in \partial f^*(\bm{y}).
\end{equation}
For any $\nu>0$, the proximal mapping of $\nu f$ at $\bm{y}$ is defined by
$$\mathrm{Prox}_{\nu f}(\bm{y}) := \arg\min_{\bm{x}} \left\{f(\bm{x}) + \frac{1}{2\nu}\|\bm{x} - \bm{y}\|^2\right\}.$$
For a closed convex set $\mathcal{X}\subseteq\mathbb{R}^{n}$, its indicator function $\delta_{\mathcal{X}}$ is defined by $\delta_{\mathcal{X}}(\bm{x})=0$ if $\bm{x}\in\mathcal{X}$ and $\delta_{\mathcal{X}}(\bm{x})=+\infty$ otherwise. Moreover, we use $\mathrm{Pr}_{\mathcal{X}}(\bm{y})$ to denote the projection of $\bm{y}$ onto a closed convex set $\mathcal{X}$. It is easy to see that $\mathrm{Pr}_{\mathcal{X}}(\cdot)\equiv\mathrm{Prox}_{\delta_{\mathcal{X}}}(\cdot)$.

In the following, a discrete probability distribution $\mathcal{P}$ with finite support points is specified by $\{(a_i,\,\bm{q}_i)\in\mathbb{R}_{+}\times\mathbb{R}^d: i = 1, \cdots, m\}$, where $\{\bm{q}_1, \cdots, \bm{q}_m\}$ are the support points or vectors and $\{a_1, \cdots, a_m\}$ are the associated probabilities or weights satisfying $\sum^{m}_{i=1}a_i=1$ and $a_i\geq0$, $i=1,\cdots,m$. We also use $\Xi^p(\mathbb{R}^d)$ to denote the set of all discrete probability distributions on $\mathbb{R}^d$ with finite $p$-th moment.

\section{Problem Statement}\label{sectpro}

In this section, we briefly recall the Wasserstein distance and describe the problem of computing a Wasserstein barycenter for a set of discrete probability distributions with finite support points. We refer interested readers to
\citep[Chapter 6]{v2008optimal} for more details on the Wasserstein distance and to \citep{ac2011barycenters,abm2016discrete} for more details on the Wasserstein barycenter.

Given two discrete distributions $\mathcal{P}^{(u)}=\{(a_i^{(u)},\,\bm{q}_i^{(u)}): i = 1, \cdots, m_u\}$ and $\mathcal{P}^{(v)}=\{(a_i^{(v)}$, $\bm{q}_i^{(v)}): i = 1, \cdots, m_v\}$, the $p$-Wasserstein distance between $\mathcal{P}^{(u)}$ and $\mathcal{P}^{(v)}$ is defined by
$$\mathcal{W}_p(\mathcal{P}^{(u)}, \,\mathcal{P}^{(v)}):=\sqrt[p]{\mathrm{v}^*},$$
where $p\geq1$ (commonly chosen to be 1 \textit{or} 2) and $\mathrm{v}^*$ is the optimal objective value of the following linear programming problem:
\begin{equation*}
\mathrm{v}^* := \min_{\pi_{ij} \geq 0}\left\{\sum_{i}^{m_u}\sum_{j}^{m_v} \pi_{ij} \|\bm{q}_i^{(u)} - \bm{q}_j^{(v)}\|_p^p ~:~
\begin{aligned}
&{\textstyle\sum^{m_u}_{i=1}}\,\pi_{ij} = a_j^{(v)}, ~~j = 1, \cdots, m_v  \\
&{\textstyle\sum^{m_v}_{j=1}}\,\pi_{ij} = a_i^{(u)}, ~~i = 1, \cdots, m_u
\end{aligned}\right\}.
\end{equation*}
Then, given a set of discrete probability distributions $\{\mathcal{P}^{(t)}\}_{t=1}^N$ with $\mathcal{P}^{(t)}=\{(a_i^{(t)},\,\bm{q}_i^{(t)}): i = 1, \cdots, m_t\}$, a $p$-Wasserstein barycenter $\mathcal{P}:=\{(w_i,\,\bm{x}_i): i = 1, \cdots, m\}$ with $m$ support points is an optimal solution of the following problem
\begin{equation*}
\min\limits_{\mathcal{P}\in\Xi^p(\mathbb{R}^d)}~{\textstyle\sum^N_{t=1}}\gamma_t\big{(}\mathcal{W}_p(\mathcal{P}, \,\mathcal{P}^{(t)})\big{)}^p
\end{equation*}
for given weights $(\gamma_1, \cdots, \gamma_N)$ satisfying $\sum^{N}_{t=1}\gamma_t=1$ and $\gamma_t>0$, $t=1,\cdots,N$. It is worth noting that the number of support points of the true barycenter is generally unknown. In theory, for $p=2$, there exists a sparse barycenter whose number of support points is upper bounded by $\sum^N_{t=1}m_t - N +1$;
see \citep[Theorem 2]{abm2016discrete}. In practice, one usually chooses $m$ by experience and sets a value that is not less than $m_t$ for $t=1,\cdots,N$. Clearly, a larger $m$ would lead to a larger problem size and hence may require more computational cost, as observed from our experiments. Since the Wasserstein distance itself is defined by a LP, the above problem is then a two-stage optimization problem. Using the definition with some simple manipulations, one can equivalently rewrite the above problem as
\begin{equation}\label{mainpro}
\begin{aligned}
&\min\limits_{\bm{w},\,X,\,\{\Pi^{(t)}\}}
~{\textstyle\sum^N_{t=1}}\,\langle\gamma_t\mathcal{D}(X, \,Q^{(t)}), \,\Pi^{(t)} \rangle  \\
&\hspace{0.65cm}\mathrm{s.t.} \hspace{0.7cm} \Pi^{(t)}\bm{e}_{m_t} = \bm{w}, ~(\Pi^{(t)})^{\top}\bm{e}_{m} = \bm{a}^{(t)},~\Pi^{(t)} \geq 0, ~~t = 1, \cdots, N, \\
&\hspace{1.9cm} \bm{e}^{\top}_m\bm{w} = 1, ~\bm{w} \geq 0,
\end{aligned}
\end{equation}
where
\begin{itemize}
\item $\bm{e}_{m_t}$ (resp. $\bm{e}_{m}$) denotes the $m_t$ (resp. $m$) dimensional vector with all entries being 1;
\item $\bm{w}:=(w_1,\cdots,w_m)^{\top}\in\mathbb{R}_{+}^m$, $X:=[\bm{x}_1,\cdots,\bm{x}_m]\in\mathbb{R}^{d\times m}$;
\item $\bm{a}^{(t)} := (a_1^{(t)},\cdots,a_{m_t}^{(t)})^{\top}\in\mathbb{R}_{+}^{m_t}$, $Q^{(t)}:=\big{[}\bm{q}_1^{(t)},\cdots,\bm{q}_{m_t}^{(t)}\big{]}\in\mathbb{R}^{d\times m_t}$ for $t=1,\cdots,N$;
\item $\Pi^{(t)} = \big{[}\,\pi_{ij}^{(t)}\,\big{]}\in\mathbb{R}^{m \times m_t}$, $\mathcal{D}(X, \,Q^{(t)}):=\big{[}\,\|\bm{x}_i - \bm{q}_j^{(t)}\|_p^p\,\big{]} \in \mathbb{R}^{m \times m_t}$ for $t=1,\cdots,N$.
\end{itemize}
\vspace{2mm}
Note that problem \eqref{mainpro} is a nonconvex problem, where one needs to find the optimal support $X$ and the optimal weight vector $\bm{w}$ of a barycenter simultaneously. However, in many real applications, the support $X$ of a barycenter can be specified empirically from the support points of $\{\mathcal{P}^{(t)}\}_{t=1}^N$. Thus, one only needs to find the weight vector $\bm{w}$ of a barycenter. In view of this, from now on, we assume that the support $X$ is given. Consequently, problem \eqref{mainpro} reduces to the following problem:
\begin{equation}\label{subpro1}
\begin{aligned}
&\min\limits_{\bm{w},\,\{\Pi^{(t)}\}}~{\textstyle\sum^N_{t=1}}\langle D^{(t)}, \,\Pi^{(t)} \rangle  \\
&\hspace{0.5cm}\mathrm{s.t.} \hspace{0.5cm} \Pi^{(t)}\bm{e}_{m_t} = \bm{w}, ~(\Pi^{(t)})^{\top}\bm{e}_{m} = \bm{a}^{(t)},~\Pi^{(t)} \geq 0, ~~t = 1, \cdots, N, \\
&\hspace{1.5cm} \bm{e}^{\top}_m\bm{w} = 1, ~\bm{w} \geq 0,
\end{aligned}
\end{equation}
where $D^{(t)}$ denotes $\gamma_t\mathcal{D}(X, \,Q^{(t)})$ for simplicity\footnote{Our method presented later actually can solve problem \eqref{subpro1} for any given matrices $D^{(1)}, \cdots, D^{(N)}$.}. This is also the main problem studied in \citep{bccnp2015iterative,borgwardt2019an,coo2015numerical,ccs2018stochastic,cd2014fast,cp2016a,or2015an,s2016stabilized,uddgn2018distributed,yl2014scaling,ywwl2017fast} for the computation of Wasserstein barycenters. Moreover, one can easily see that problem \eqref{subpro1} is indeed a large-scale LP containing $(m+m\sum^N_{t=1} m_t)$ nonnegative variables and $(Nm+\sum^N_{t=1}m_t+1)$ equality constraints. For $N=100$, $m=1000$ and $m_t = 1000$ for all $t=1,\cdots,N$, such LP has about $10^8$ nonnegative variables and $2\times 10^5$ equality constraints.

\begin{remark}[\textbf{Practical computational consideration when $\bm{a}^{(t)}$ is sparse}]\label{rem}
Note that any feasible point $(\bm{w},\,\{\Pi^{(t)}\})$ of problem \eqref{subpro1} must satisfy $(\Pi^{(t)})^{\top}\bm{e}_{m} = \bm{a}^{(t)}$ and $\Pi^{(t)} \geq 0$ for any $t=1,\cdots,N$. This implies that if $a_j^{(t)}=0$ for some $1\leq j\leq m_t$ and $1 \leq t \leq N$, then $\pi_{ij}^{(t)}=0$ for all $1 \leq i \leq m$, i.e., all entries in the $j$-th column of $\Pi^{(t)}$ are zeros. Based on this fact, one can verify the following statements.
\begin{itemize}
\item For any optimal solution $(\bm{w}^*,\,\{\Pi^{(t),*}\})$ of problem \eqref{subpro1}, the point $(\bm{w}^*,\,\{\Pi^{(t),*}_{\mathcal{J}_t}\})$ is also an optimal solution of the following problem \begin{equation}\label{redsubpro1}
    \begin{aligned}
    &\min\limits_{\bm{w},\,\{\widehat{\Pi}^{(t)}\}}~{\textstyle\sum^N_{t=1}}\langle D^{(t)}, \,\widehat{\Pi}^{(t)} \rangle  \\
    &\hspace{0.5cm}\mathrm{s.t.} \hspace{0.5cm} \widehat{\Pi}^{(t)}\bm{e}_{m'_t} = \bm{w}, ~(\widehat{\Pi}^{(t)})^{\top}\bm{e}_{m} = \bm{a}^{(t)}_{\mathcal{J}_t},~\widehat{\Pi}^{(t)} \geq 0, ~~t = 1, \cdots, N, \\
    &\hspace{1.5cm} \bm{e}^{\top}_m\bm{w} = 1, ~\bm{w} \geq 0,
    \end{aligned}
    \end{equation}
    where $\mathcal{J}_t$ denotes the support set of $\bm{a}^{(t)}$, i.e., $\mathcal{J}_t:=\{\,j : a_j^{(t)}\neq0\,\}$, $m'_t$ denotes the cardinality of $\mathcal{J}_t$, $\bm{a}^{(t)}_{\mathcal{J}_t}\in\mathbb{R}^{m'_t}$ denotes the subvector of $\bm{a}^{(t)}$ obtained by selecting the entries indexed by $\mathcal{J}_t$ and $\Pi^{(t),*}_{\mathcal{J}_t}\in\mathbb{R}^{m \times m'_t}$ denotes the submatrix of $\Pi^{(t),*}$ obtained by selecting the columns indexed by $\mathcal{J}_t$.

\item For any optimal solution $(\bm{w}^*,\,\{\widehat{\Pi}^{(t),*}\})$ of problem \eqref{redsubpro1}, the point $(\bm{w}^*,\,\{\Pi^{(t),*}\})$ obtained by setting $\Pi^{(t),*}_{\mathcal{J}_t}=\widehat{\Pi}^{(t),*}$ and $\Pi^{(t),*}_{\mathcal{J}_t^c}=0$ is also an optimal solution of problem \eqref{subpro1}, where $\mathcal{J}_t^c:=\{\,j : a_j^{(t)}=0\,\}$.
\end{itemize}
\vspace{2mm}
Therefore, one can obtain an optimal solution of problem \eqref{subpro1} by computing an optimal solution of problem \eqref{redsubpro1}. Note that the size of problem \eqref{redsubpro1} can be much smaller than that of problem \eqref{subpro1} when each $\bm{a}^{(t)}$ is sparse, i.e., $m'_t \ll m_t$. Thus, solving problem \eqref{redsubpro1} can reduce the computational cost and save memory in practice. Since problem \eqref{redsubpro1} takes the same form as problem \eqref{subpro1}, we only consider problem \eqref{subpro1} in the following.
\end{remark}

For notational simplicity, let $\Delta_m:=\{\bm{w}\in\mathbb{R}^m : \bm{e}^{\top}_m\bm{w} = 1, ~\bm{w} \geq 0\}$ and $\delta^t_{+}$ be the indicator function over $\{\Pi^{(t)}\in\mathbb{R}^{m\times m_t}:\Pi^{(t)}\geq0\}$ for each $t=1,\cdots,N$. By enforcing the constraints $\bm{w}\in\Delta_m$ and $\Pi^{(t)}\geq0$, $t=1,\cdots,N$ via adding the corresponding indicator functions in the objective, problem \eqref{subpro1} can be equivalently written as
\begin{equation}\label{repro}
\begin{aligned}
&\min\limits_{\bm{w},\,\{\Pi^{(t)}\}}~\delta_{\Delta_m}(\bm{w}) + {\textstyle\sum^N_{t=1}} \delta^t_{+}(\Pi^{(t)})
+ {\textstyle\sum^N_{t=1}}\langle D^{(t)}, \,\Pi^{(t)} \rangle  \\
&\hspace{0.45cm}\mathrm{s.t.} \hspace{0.5cm} \Pi^{(t)}\bm{e}_{m_t} = \bm{w}, ~(\Pi^{(t)})^{\top}\bm{e}_{m} = \bm{a}^{(t)}, ~~t = 1, \cdots, N.
\end{aligned}
\end{equation}

We next derive the dual problem of \eqref{repro} (hence \eqref{subpro1}). To this end, we write down the Lagrangian function associated with \eqref{repro} as follows:
\begin{equation}\label{lagfun1}
\begin{aligned}
&\Upsilon\big{(}\bm{w},\{\Pi^{(t)}\}; \{\bm{y}^{(t)}\}, \{\bm{z}^{(t)}\}\big{)}
:=\delta_{\Delta_m}(\bm{w}) + \sum^N_{t=1} \delta^t_{+}(\Pi^{(t)}) + \sum^N_{t=1} \langle D^{(t)}, \,\Pi^{(t)} \rangle \\
&\qquad + \sum^N_{t=1} \langle \bm{y}^{(t)}, \,\Pi^{(t)}\bm{e}_{m_t}- \bm{w}\rangle + \sum^N_{t=1}\langle \bm{z}^{(t)}, \,(\Pi^{(t)})^{\top}\bm{e}_m-\bm{a}^{(t)}\rangle,
\end{aligned}
\end{equation}
where $\bm{y}^{(t)} \in \mathbb{R}^m$, $\bm{z}^{(t)}\in\mathbb{R}^{m_t}$, $t = 1, \cdots, N$ are multipliers. Then, the dual problem of \eqref{repro} is given by
\begin{equation}\label{dualorg}
\max\limits_{\{\bm{y}^{(t)}\}, \{\bm{z}^{(t)}\}}\,\min\limits_{\bm{w},\{\Pi^{(t)}\}}\,\Upsilon\big{(}\bm{w},\{\Pi^{(t)}\}; \{\bm{y}^{(t)}\}, \{\bm{z}^{(t)}\}\big{)}.
\end{equation}
Observe that
\begin{equation*}
\begin{aligned}
&~~\min\limits_{\bm{w},\,\{\Pi^{(t)}\}}\Upsilon\big{(}\bm{w},\{\Pi^{(t)}\}; \{\bm{y}^{(t)}\}, \{\bm{z}^{(t)}\}\big{)} \\
&=\min\limits_{\bm{w},\,\{\Pi^{(t)}\}} \left\{ \begin{aligned}
&\delta_{\Delta_m}(\bm{w}) - \langle {\textstyle \sum^N_{t=1}} \bm{y}^{(t)}, \,\bm{w} \rangle + {\textstyle\sum^N_{t=1}}\big{(}\delta_{+}^t(\Pi^{(t)}) + \langle D^{(t)} + \bm{y}^{(t)}\bm{e}^{\top}_{m_t} + \bm{e}_{m}(\bm{z}^{(t)})^{\top}, \,\Pi^{(t)}\rangle\big{)} \\
&~~ -{\textstyle\sum^N_{t=1}}\langle \bm{z}^{(t)}, \,\bm{a}^{(t)}\rangle
\end{aligned}\right\} \\
&= \left\{\begin{aligned}
&- \delta^*_{\Delta_m}\big{(}{\textstyle\sum^N_{t=1}}\bm{y}^{(t)}\big{)} - {\textstyle\sum^N_{t=1}}\langle \bm{z}^{(t)}, \,\bm{a}^{(t)}\rangle, \quad \mathrm{if} ~ D^{(t)} + \bm{y}^{(t)}\bm{e}^{\top}_{m_t} + \bm{e}_{m}(\bm{z}^{(t)})^{\top} \geq 0, ~t = 1, \cdots, N, \\
&-\infty, \quad \mathrm{otherwise},
\end{aligned}\right.
\end{aligned}
\end{equation*}
where $\delta^*_{\Delta_m}$ is the conjugate of $\delta_{\Delta_m}$. Thus, \eqref{dualorg} is equivalent to
\begin{equation*}
\begin{aligned}
&\min\limits_{\{\bm{y}^{(t)}\}, \,\{\bm{z}^{(t)}\}}~ \delta^*_{\Delta_m}\big{(}{\textstyle\sum^N_{t=1}\bm{y}^{(t)}}\big{)} + {\textstyle\sum^N_{t=1}}\langle \bm{z}^{(t)}, \,\bm{a}^{(t)}\rangle \\
&\hspace{0.7cm}\mathrm{s.t.} \hspace{0.8cm} D^{(t)} + \bm{y}^{(t)}\bm{e}^{\top}_{m_t} + \bm{e}_{m}(\bm{z}^{(t)})^{\top} \geq 0, ~~t = 1, \cdots,N.
\end{aligned}
\end{equation*}
By introducing auxiliary variables $\bm{u}, V^{(1)}, \cdots, V^{(N)}$, we can further reformulate the above problem as
\begin{equation}\label{subdualpro}
\begin{aligned}
&\min\limits_{\bm{u}, \,\{V^{(t)}\}, \,\{\bm{y}^{(t)}\}, \,\{\bm{z}^{(t)}\}}  \delta^*_{\Delta_m}(\bm{u}) + {\textstyle\sum^N_{t=1}} \delta_{+}^t(V^{(t)}) + {\textstyle\sum^N_{t=1}}\langle \bm{z}^{(t)}, \,\bm{a}^{(t)}\rangle \\
&\hspace{1.35cm}\mathrm{s.t.} \hspace{1.3cm} {\textstyle\sum^N_{t=1}}\bm{y}^{(t)} - \bm{u} = 0,  \\
&\hspace{3.2cm} V^{(t)} - D^{(t)} - \bm{y}^{(t)}\bm{e}^{\top}_{m_t} - \bm{e}_{m}(\bm{z}^{(t)})^{\top} = 0, ~~t = 1, \cdots,N.
\end{aligned}
\end{equation}
Note that problem \eqref{subdualpro} can be viewed as a linearly constrained convex problem with 3 blocks of variables grouped as $\left(\bm{u}, \,\{V^{(t)}\}\right)$, $\{\bm{y}^{(t)}\}$ and $\{\bm{z}^{(t)}\}$, whose objective is nonsmooth only with respect to $\left(\bm{u}, \,\{V^{(t)}\}\right)$ and linear with respect to the other two. Thus, this problem exactly falls into the class of convex problems for which the sGS-ADMM is applicable; see \citep{cst2017efficient,lst2016schur}. Then, it is natural to adapt the sGS-ADMM for solving problem \eqref{subdualpro}, which is presented in the next section.

\begin{remark}[\textbf{2-block ADMM for solving \eqref{subpro1}}]
It is worth noting that one can also apply the 2-block ADMM to solve the primal problem \eqref{subpro1} by introducing some proper auxiliary variables. For example, one can consider the following equivalent reformulation of \eqref{subpro1}:
\begin{equation*}
\begin{aligned}
&\min\limits_{\bm{w}, \{\Pi^{(t)}\}, \{\Gamma^{(t)}\}}~\delta_{\Delta_m}(\bm{w}) + {\textstyle\sum^N_{t=1}} \delta_{\Delta_{\Pi^{(t)}}}(\Pi^{(t)}) + {\textstyle\sum^N_{t=1}} \langle D^{(t)}, \,\Pi^{(t)} \rangle  \\
&\hspace{0.9cm} \mathrm{s.t.} \hspace{1cm} \Pi^{(t)} = \Gamma^{(t)}, \quad \Gamma^{(t)}\bm{e}_{m_t} = \bm{w}, \quad t = 1, \cdots, N,
\end{aligned}
\end{equation*}
where $\Delta_{\Pi^{(t)}} := \{\Pi^{(t)}\in\mathbb{R}^{m\times m_t} : (\Pi^{(t)})^{\top}\bm{e}_m = \bm{a}^{(t)}, ~\Pi^{(t)}\geq0\}$. Then, the 2-block ADMM can be readily applied with $(\bm{w}, \Pi^{(1)}, \cdots, \Pi^{(N)})$ being one block and $(\Gamma^{(1)}, \cdots, \Gamma^{(N)})$ as the other. This 2-block ADMM avoids the need to solve quadratic programming subproblems and hence is more efficient than the one used in \citep{yl2014scaling}. However, it needs to compute the projection onto the $m$-dimensional simplex $(1+\sum^N_{t=1}m_t)$ times when solving the $(\bm{w}, \Pi^{(1)}, \cdots, \Pi^{(N)})$-subproblem in each iteration. This is still time-consuming when $N$ or $m_t$ is large. Thus, this 2-block ADMM is also not efficient enough for solving large-scale problems.  In addition, we have adapted the 2-block ADMM for solving other reformulations of \eqref{subpro1}, but they all perform worse than our sGS-ADMM to be presented later. Hence, we will no longer consider ADMM-type methods for solving the primal problem \eqref{subpro1} or its equivalent variants in this paper.
\end{remark}

\section{sGS-ADMM for Computing Wasserstein Barycenters}\label{sectalg}

In this section, we present the sGS-ADMM for solving problem \eqref{subdualpro}. First, we write down the Lagrangian function associated with \eqref{subdualpro} as follows:
\begin{equation}\label{lagfun2}
\begin{aligned}
&~~\widetilde{\Upsilon}\big{(}\bm{u}, \{V^{(t)}\}, \{\bm{y}^{(t)}\}, \{\bm{z}^{(t)}\}; \bm{\lambda}, \{\Lambda^{(t)}\}\big{)} \\
&= \delta^*_{\Delta_m}(\bm{u}) + {\textstyle\sum^N_{t=1}}\delta_{+}^t(V^{(t)}) + {\textstyle\sum^N_{t=1}}\langle \bm{z}^{(t)}, \,\bm{a}^{(t)}\rangle + \langle \bm{\lambda}, \,{\textstyle\sum^N_{t=1}}\bm{y}^{(t)} - \bm{u}\rangle \\
&~~\quad + {\textstyle\sum^N_{t=1}} \langle \Lambda^{(t)}, \,V^{(t)} - D^{(t)} - \bm{y}^{(t)}\bm{e}^{\top}_{m_t} - \bm{e}_{m}(\bm{z}^{(t)})^{\top}\rangle,
\end{aligned}
\end{equation}
where $\bm{\lambda} \in \mathbb{R}^m$, $\Lambda^{(t)}\in\mathbb{R}^{m \times m_t}$, $t = 1, \cdots, N$ are multipliers. Then, the augmented Lagrangian function associated with \eqref{subdualpro} is
\begin{equation*}
\begin{aligned}
&~~\mathcal{L}_{\beta}\big{(}\bm{u}, \{V^{(t)}\}, \{\bm{y}^{(t)}\}, \{\bm{z}^{(t)}\}; \bm{\lambda}, \{\Lambda^{(t)}\}\big{)} \\
&= \widetilde{\Upsilon}\big{(}\bm{u}, \{V^{(t)}\}, \{\bm{y}^{(t)}\}, \{\bm{z}^{(t)}\}; \bm{\lambda}, \{\Lambda^{(t)}\}\big{)}
+ {\textstyle\frac{\beta}{2}}\|{\textstyle\sum^N_{t=1}}\bm{y}^{(t)} - \bm{u}\|^2 \\
&\qquad + {\textstyle\frac{\beta}{2}\sum^N_{t=1}}\|V^{(t)} - D^{(t)} - \bm{y}^{(t)}\bm{e}^{\top}_{m_t} - \bm{e}_{m}(\bm{z}^{(t)})^{\top}\|_F^2,
\end{aligned}
\end{equation*}
where $\beta>0$ is the penalty parameter. The sGS-ADMM for solving \eqref{subdualpro} is now readily presented in Algorithm \ref{sGSADMMdual}.

\begin{algorithm}[h]
\caption{sGS-ADMM for solving \eqref{subdualpro}}\label{sGSADMMdual}
\textbf{Input:} the penalty parameter $\beta>0$, and the initialization $\bm{u}^0\in\mathbb{R}^m$, $\bm{\lambda}^0\in\mathbb{R}^m$, $\bm{y}^{(t),0} \in \mathbb{R}^m$, $\bm{z}^{(t),0}\in\mathbb{R}^{m_t}$, $V^{(t),0}\in\mathbb{R}_{+}^{m \times m_t}$, $\Lambda^{(t),0}\in\mathbb{R}^{m \times m_t}$, $t = 1, \cdots, N$. Set $k=0$. \\
\textbf{while} a termination criterion is not met, \textbf{do}
\begin{itemize}[leftmargin=1.5cm]
\item[\textbf{Step}] \textbf{1.} Compute
                     \begin{equation*}
                     \big{(}\bm{u}^{k+1},\,\{V^{(t),k+1}\}\big{)} = \arg\min\limits_{\bm{u},\,\{V^{(t)}\}}~\mathcal{L}_{\beta}\big{(}\bm{u}, \{V^{(t)}\}, \{\bm{y}^{(t),k}\}, \{\bm{z}^{(t),k}\};\,\bm{\lambda}^k, \{\Lambda^{(t),k}\}\big{)}.
                     \end{equation*}

\item[\textbf{Step}] \textbf{2a.} Compute
                     \begin{equation*}
                     \{\tilde{\bm{z}}^{(t),k+1}\} = \arg\min\limits_{\{\bm{z}^{(t)}\}}~\mathcal{L}_{\beta}\big{(}\bm{u}^{k+1}, \{V^{(t),k+1}\}, \{\bm{y}^{(t),k}\}, \{\bm{z}^{(t)}\};\,\bm{\lambda}^k, \{\Lambda^{(t),k}\}\big{)}.
                     \end{equation*}

\item[\textbf{Step}] \textbf{2b.} Compute
                     \begin{equation*}
                     \{\bm{y}^{(t),k+1}\} = \arg\min\limits_{\{\bm{y}^{(t)}\}}~\mathcal{L}_{\beta}\big{(}\bm{u}^{k+1}, \{V^{(t),k+1}\}, \{\bm{y}^{(t)}\}, \{\tilde{\bm{z}}^{(t),k+1}\};\,\bm{\lambda}^k, \{\Lambda^{(t),k}\}\big{)}.
                     \end{equation*}

\item[\textbf{Step}] \textbf{2c.} Compute
                     \begin{equation*}
                     \{\bm{z}^{(t),k+1}\} = \arg\min\limits_{\{\bm{z}^{(t)}\}}~\mathcal{L}_{\beta}\big{(}\bm{u}^{k+1}, \{V^{(t),k+1}\}, \{\bm{y}^{(t),k+1}\}, \{\bm{z}^{(t)}\};\,\bm{\lambda}^k, \{\Lambda^{(t),k}\}\big{)}.
                     \end{equation*}

\item[\textbf{Step}] \textbf{3.} Compute
                     \begin{equation*}
                     \begin{aligned}
                     \bm{\lambda}^{k+1} &= \bm{\lambda}^k + \tau\beta\big{(}{\textstyle\sum^N_{t=1}}\bm{y}^{(t),k+1} - \bm{u}^{k+1}\big{)}, \\
                     \Lambda^{(t),k+1} &= \Lambda^{(t),k} + \tau\beta\big{(}V^{(t),k+1} - D^{(t)} - \bm{y}^{(t),k+1}\bm{e}^{\top}_{m_t} - \bm{e}_{m}(\bm{z}^{(t),k+1})^{\top}\big{)}, ~~ t=1,\cdots,N,
                     \end{aligned}
                     \end{equation*}
                     where $\tau\in(0, \frac{1+\sqrt{5}}{2})$ is the dual step-size that is typically set to $1.618$.
\end{itemize}
\textbf{end while}  \\
\textbf{Output}: $\bm{u}^{k+1}$, $\{V^{(t),k+1}\}$, $\{\bm{y}^{(t),k+1}\}$, $\{\bm{z}^{(t),k+1}\}$, $\bm{\lambda}^{k+1}$, $\{\Lambda^{(t),k+1}\}$. \vspace{1mm}
\end{algorithm}

We next show that all subproblems in Algorithm \ref{sGSADMMdual} can be solved efficiently (in fact analytically) and the subproblems in each step can also be computed in parallel. This makes our method highly suitable for solving large-scale problems. The computational details and the efficient implementations in each step are presented as follows.

\begin{itemize}[leftmargin=1cm]

\item[\textbf{Step}] \textbf{1.} Note that $\mathcal{L}_{\beta}$ is actually separable with respect to $\bm{u}, V^{(1)}, \cdots, V^{(N)}$ and hence one can compute $\bm{u}^{k+1}, V^{(1),k+1}, \cdots, V^{(N),k+1}$ independently. Specifically, $\bm{u}^{k+1}$ is obtained by solving
\begin{equation*}
\min \limits_{\bm{u}\in\mathbb{R}^m} \Big\{ \delta^*_{\Delta_m}(\bm{u}) - \langle \bm{\lambda}^k, \, \bm{u}\rangle + {\textstyle\frac{\beta}{2}\|\sum^N_{t=1}\bm{y}^{(t),k} - \bm{u}\|^2} \Big\}.
\end{equation*}
Thus, we have
\begin{equation*}
\begin{aligned}
\bm{u}^{k+1}
&= \mathrm{Prox}_{\beta^{-1}\delta^*_{\Delta_m}} \big{(} \beta^{-1}\bm{\lambda}^k + {\textstyle\sum^N_{t=1}}\bm{y}^{(t),k} \big{)} \\
&= \big{(} \beta^{-1}\bm{\lambda}^k + {\textstyle\sum^N_{t=1}}\bm{y}^{(t),k} \big{)} - \beta^{-1}\, \mathrm{Prox}_{\beta\delta_{\Delta_m}} \big{(}\bm{\lambda}^k+\beta{\textstyle\sum^N_{t=1}}\bm{y}^{(t),k}\big{)},
\end{aligned}
\end{equation*}
where the last equality follows from the Moreau decomposition
\citep[Theorem 14.3(ii)]{bc2011convex}, i.e., $\bm{x} = \mathrm{Prox}_{\nu f^*}(\bm{x}) + \nu \mathrm{Prox}_{f/\nu}(\bm{x}/\nu)$ for any $\nu>0$ and the proximal mapping of $\beta\delta_{\Delta_m}$ can be computed efficiently by the algorithm proposed in \citep{c2016fast} with the complexity of $\mathcal{O}(m)$ that is typically observed in practice. Moreover, for each $t=1,\cdots,N$, $V^{(t),k+1}$ can be computed in parallel by solving
\begin{equation*}
\min \limits_{V^{(t)}} \Big\{ \delta_{+}^t(V^{(t)}) + \langle \Lambda^{(t),k}, \,V^{(t)}\rangle + {\textstyle\frac{\beta}{2}}\|V^{(t)} - D^{(t)} - \bm{y}^{(t),k}\bm{e}^{\top}_{m_t} - \bm{e}_{m}(\bm{z}^{(t),k})^{\top}\|_F^2\Big\}.
\end{equation*}
Then, it is easy to see that
\begin{equation*}
V^{(t),k+1} = \max\big{\{}\widetilde{D}^{(t),k} - \beta^{-1}\Lambda^{(t),k}, \,0\big{\}},
\end{equation*}
where $\widetilde{D}^{(t),k}:=D^{(t)} + \bm{y}^{(t),k}\bm{e}^{\top}_{m_t} + \bm{e}_{m}(\bm{z}^{(t),k})^{\top}$. Note that $\widetilde{D}^{(t),k}$ is already computed for updating $\Lambda^{(t),k}$ in the previous iteration and thus it can be reused in the current iteration. The computational complexity in this step is $\mathcal{O}(Nm+m\sum^N_{t=1}m_t)$. We should emphasize that because the matrices such as $\{\widetilde{D}^{(t),k}\}$, $\{\Lambda^{(t),k}\}$ are large and numerous, even performing simple operations such as adding two such matrices can be time-consuming. Thus, we have paid special attention to arrange the computations in each step of the sGS-ADMM so that matrices computed in one step can be reused for the next step. \vspace{2mm}

\item[\textbf{Step}] \textbf{2a.} Similarly, $\mathcal{L}_{\beta}$ is separable with respect to $\bm{z}^{(1)}, \cdots, \bm{z}^{(N)}$ and then one can also compute $\tilde{\bm{z}}^{(1),k+1}$, $\cdots$, $\tilde{\bm{z}}^{(N),k+1}$ in parallel. For each $t=1,\cdots,N$, $\tilde{\bm{z}}^{(t),k+1}$ is obtained by solving
\begin{equation*}
\min \limits_{\bm{z}^{(t)}} \Big\{ \langle \bm{z}^{(t)}, \,\bm{a}^{(t)}\rangle - \langle \Lambda^{(t),k}, \,\bm{e}_{m}(\bm{z}^{(t)})^{\top}\rangle + {\textstyle\frac{\beta}{2}}\|V^{(t),k+1} - D^{(t)} - \bm{y}^{(t),k}\bm{e}^{\top}_{m_t} - \bm{e}_{m}(\bm{z}^{(t)})^{\top}\|_F^2\Big\}.
\end{equation*}
It is easy to prove that
\begin{equation*}
\begin{aligned}
\tilde{\bm{z}}^{(t),k+1}
&= {\textstyle\frac{1}{m}}\big{(}(V^{(t),k+1})^{\top}\bm{e}_m - (D^{(t)})^{\top}\bm{e}_m
-\big{(}\bm{e}_m^{\top}\bm{y}^{(t),k}\big{)}\bm{e}_{m_t} + \beta^{-1}(\Lambda^{(t),k})^{\top}\bm{e}_m - \beta^{-1}\bm{a}^{(t)} \big{)} \\
&= \bm{z}^{(t),k} - {\textstyle\frac{1}{m}}\big{(} \beta^{-1}\bm{a}^{(t)} + (B^{(t),k})^{\top}\bm{e}_m\big{)},
\end{aligned}
\end{equation*}
where $B^{(t),k} := \widetilde{D}^{(t),k} - \beta^{-1}\Lambda^{(t),k} - V^{(t),k+1} =
\min\{\widetilde{D}^{(t),k} - \beta^{-1}\Lambda^{(t),k}, \,0\}$. Note that $\widetilde{D}^{(t),k} - \beta^{-1}\Lambda^{(t),k}$ has already been computed in \textbf{Step 1} and hence $B^{(t),k}$ can be computed by just a simple $\min(\cdot)$ operation. We note that $\tilde{\bm{z}}^{(t),k+1}$ is computed analytically for all $t=1,\dots,N$, and the computational complexity in this step is $\mathcal{O}((m+1)\sum^N_{t=1}m_t)$. \vspace{2mm}

\item[\textbf{Step}] \textbf{2b.} In this step, one can see that $\bm{y}^{(1)}$, $\cdots$, $\bm{y}^{(N)}$ are coupled in $\mathcal{L}_{\beta}$ (due to the quadratic term $\frac{\beta}{2}\|\sum^N_{t=1}\bm{y}^{(t)}-\bm{u}^{k+1}\|^2$) and hence the problem of minimizing $\mathcal{L}_{\beta}$ with respect to $\bm{y}^{(1)}, \cdots, \bm{y}^{(N)}$ cannot be reduced to $N$ separable subproblems. However, one can still compute them efficiently based on the following observation. Note that $(\bm{y}^{(1),k+1}$, $\cdots$, $\bm{y}^{(N),k+1})$ is obtained by solving
\begin{equation*}
\min\limits_{\bm{y}^{(1)},\cdots,\bm{y}^{(N)}}
\left\{\begin{aligned}
&\Phi^k(\bm{y}^{(1)},\cdots,\bm{y}^{(N)}):=\langle\bm{\lambda}^k, \,{\textstyle\sum^N_{t=1}}\bm{y}^{(t)}\rangle + {\textstyle\frac{\beta}{2}}\|{\textstyle\sum^N_{t=1}}\bm{y}^{(t)} - \bm{u}^{k+1}\|^2 \\
&~ + {\textstyle\sum^N_{t=1}}\big{(}\langle -\Lambda^{(t),k}\bm{e}_{m_t}, \,\bm{y}^{(t)}\rangle + {\textstyle\frac{\beta}{2}}\|V^{(t),k+1} - D^{(t)} - \bm{y}^{(t)}\bm{e}^{\top}_{m_t} - \bm{e}_{m}(\tilde{\bm{z}}^{(t),k+1})^{\top}\|_F^2\big{)}
\end{aligned}\right\}.
\end{equation*}
The gradient of $\Phi^k$ with respect to $\bm{y}^{(t)}$ is
\begin{equation*}
\begin{aligned}
&~~\nabla_{\bm{y}^{(t)}} \Phi^k(\bm{y}^{(1)},\cdots,\bm{y}^{(N)}) \\
&= \bm{\lambda}^k + \beta\big{(}{\textstyle\sum^N_{\ell=1}}\bm{y}^{(\ell)}-\bm{u}^{k+1}\big{)} + \beta \big{(}-\beta^{-1}\Lambda^{(t),k} + D^{(t)}+\bm{y}^{(t)}\bm{e}^{\top}_{m_t}+\bm{e}_{m}(\tilde{\bm{z}}^{(t),k+1})^{\top}-V^{(t),k+1}\big{)}\bm{e}_{m_t} \\
&= \beta{\textstyle\sum^N_{\ell=1}}\bm{y}^{(\ell)} + \beta m_t(\bm{y}^{(t)}-\bm{y}^{(t),k}) + \bm{\lambda}^k - \beta \bm{u}^{k+1} + \beta\big{(}B^{(t),k}+\bm{e}_{m}(\tilde{\bm{z}}^{(t),k+1}-\bm{z}^{(t),k})^{\top}\big{)}\bm{e}_{m_t} \\
&= \beta{\textstyle\sum^N_{\ell=1}}(\bm{y}^{(\ell)}-\bm{y}^{(\ell),k}) + \beta m_t(\bm{y}^{(t)}-\bm{y}^{(t),k}) + \beta\bm{h}^k + \beta\widetilde{B}^{(t),k}\bm{e}_{m_t},
\end{aligned}
\end{equation*}
where $\widetilde{B}^{(t),k}:=B^{(t),k}+\bm{e}_{m}(\tilde{\bm{z}}^{(t),k+1}-\bm{z}^{(t),k})^{\top}$ and $\bm{h}^k:=\beta^{-1}\bm{\lambda}^k - \bm{u}^{k+1} + {\textstyle\sum^N_{\ell=1}}\bm{y}^{(\ell),k}$. It follows from the optimality condition, namely, $\nabla\Phi^k(\bm{y}^{(1),k+1},\cdots,\bm{y}^{(N),k+1})=0$ that, for any $t=1,\cdots,N$,
\begin{equation}\label{addeq1}
{\textstyle\sum^N_{\ell=1}}(\bm{y}^{(\ell),k+1}-\bm{y}^{(\ell),k}) + m_t(\bm{y}^{(t),k+1}-\bm{y}^{(t),k}) + \bm{h}^k + \widetilde{B}^{(t),k}\bm{e}_{m_t} = 0.
\end{equation}
By dividing $m_t$ in \eqref{addeq1} for $t = 1, \cdots, N$, adding all resulting equations and doing some simple algebraic manipulations, one can obtain that
\begin{equation*}
\tilde{\bm{b}}^{k} := {\textstyle\sum^N_{\ell=1}}(\bm{y}^{(\ell),k+1}-\bm{y}^{(\ell),k}) = -\frac{(\sum^N_{\ell=1}m_{\ell}^{-1})\bm{h}^k + \sum^N_{\ell=1}m_{\ell}^{-1}\widetilde{B}^{(\ell),k}\bm{e}_{m_{\ell}}}{1+\sum^N_{\ell=1}m_{\ell}^{-1}}.
\end{equation*}
Then, using this equality and \eqref{addeq1}, we have
\begin{equation*}
\begin{aligned}
\bm{y}^{(t), k+1} &= \bm{y}^{(t), k} - {\textstyle\frac{1}{m_t}}\big{(}\tilde{\bm{b}}^{k} + \bm{h}^k + \widetilde{B}^{(t),k}\bm{e}_{m_t}\big{)}, \quad t = 1, \cdots, N.
\end{aligned}
\end{equation*}
Observe that we can compute $\bm{y}^{(t),k+1}$ analytically for $t=1,\cdots,N$.
In the above computations, one can first compute $\widetilde{B}^{(t),k}\bm{e}_{m_t}$ in parallel for $t=1,\cdots,N$ to obtain $\tilde{\bm{b}}^{k}$. Then, $\bm{y}^{(t), k+1}$ can be computed in parallel for $t=1,\cdots,N$. By using the updating formula for $\tilde{\bm{z}}^{(t),k+1}$ in \textbf{Step 2a}, we have that
$\widetilde{B}^{(t),k} \e_{m_t} = B^{(t),k} \e_{m_t} - \frac{1}{m}\e_m \big(\e_m^T B^{(t),k} \e_{m_t}+ \beta^{-1}
\langle \e_{m_t}, \,\bm{a}^{(t)}\rangle\big)$. Thus, there is no need to form $\widetilde{B}^{(t),k}$ explicitly. The computational complexity in this step is $\mathcal{O}(Nm+m\sum^N_{t=1}m_t)$.

\item[\textbf{Step}] \textbf{2c.} Similar to \textbf{Step 2a}, for each $t=1,\cdots,N$, $\bm{z}^{(t),k+1}$ can be obtained independently by solving
\begin{equation*}
\min \limits_{\bm{z}^{(t)}} \Big\{ \langle \bm{z}^{(t)}, \,\bm{a}^{(t)}\rangle - \langle \Lambda^{(t),k}, \,\bm{e}_{m}(\bm{z}^{(t)})^{\top}\rangle + {\textstyle\frac{\beta}{2}}\|V^{(t),k+1} - D^{(t)} - \bm{y}^{(t),k+1}\bm{e}^{\top}_{m_t} - \bm{e}_{m}(\bm{z}^{(t)})^{\top}\|_F^2 \Big\}
\end{equation*}
and it is easy to show that
\begin{equation*}
\begin{aligned}
\bm{z}^{(t),k+1}
&= \bm{z}^{(t),k} - {\textstyle\frac{1}{m}}( \beta^{-1}\bm{a}^{(t)} + (C^{(t),k})^{\top}\bm{e}_m) \\
&= \bm{z}^{(t),k} - {\textstyle\frac{1}{m}}( \beta^{-1}\bm{a}^{(t)} + (B^{(t),k} + (\bm{y}^{(t),k+1}-\bm{y}^{(t),k})\bm{e}^{\top}_{m_t} )^{\top}\bm{e}_m ) \\
&= \tilde{\bm{z}}^{(t),k+1} - {\textstyle\frac{1}{m}}\big{(}(\bm{y}^{(t),k+1}-\bm{y}^{(t),k})^{\top}\bm{e}_m\big{)}\bm{e}_{m_t},
\end{aligned}
\end{equation*}
where $C^{(t),k}:=D^{(t)} + \bm{y}^{(t),k+1}\bm{e}^{\top}_{m_t} + \bm{e}_{m}(\bm{z}^{(t),k})^{\top} - \beta^{-1}\Lambda^{(t),k} - V^{(t),k+1} = B^{(t),k} + (\bm{y}^{(t),k+1}-\bm{y}^{(t),k})\bm{e}^{\top}_{m_t}$.
In light of the above, one can also compute $\bm{z}^{(t),k+1}$ efficiently. The computational complexity in this step is $\mathcal{O}(Nm+\sum^N_{t=1}m_t)$, which is much smaller than the costs in \textbf{Step 2a} and \textbf{Step 2b}.

\end{itemize}

From the above, together with the update of multipliers in \textbf{Step 3}, one can see that the main computational complexity of our sGS-ADMM at each iteration is $\mathcal{O}(m\sum^N_{t=1}m_t)$.

\begin{remark}[\textbf{Comments on \textbf{Step 2a--2c} in Algorithm \ref{sGSADMMdual}}]
Comparing with the directly extended ADMM, our sGS-ADMM in Algorithm \ref{sGSADMMdual} just has one more update of $\{\tilde{\bm{z}}^{(t),k+1}\}$ in {\bf Step 2a}. This step is actually the key to guarantee the convergence of the algorithm. We shall see in the next section that computing $\left(\{\bm{y}^{(t),k+1}\},\,\{\bm{z}^{(t),k+1}\}\right)$ from {\bf Step 2a--2c} is indeed equivalent to minimizing $\mathcal{L}_{\beta}$ plus a special proximal term simultaneously with respect to $\left(\{\bm{y}^{(t)}\},\,\{\bm{z}^{(t)}\}\right)$. Moreover, the reader may have observed that instead of computing $\{\bm{y}^{(t),k+1}\}$ and $\{\bm{z}^{(t),k+1}\}$ sequentially as in {\bf Step 2a--2c}, one can also  compute $(\{\bm{y}^{(t),k+1}\},\{\bm{z}^{(t),k+1}\})$ simultaneously in one step by solving a huge linear system of equations of dimension $mN + \sum_{t=1}^N m_t$. Unfortunately, for the latter approach, the computation of the solution would require the Cholesky factorization of a huge coefficient matrix, and this approach is not practically viable. In contrast, for our approach in {\bf Step 2a-2c}, we have seen that the solutions can be computed analytically without the need to perform Cholesky factorizations of  large coefficient matrices. This also explains why we have designed the computations as in {\bf Step 2a-2c}.
\end{remark}

\begin{remark}[\textbf{Extension to the free support case}]\label{RemFree}
We briefly discuss the case when the support points of a barycenter are not pre-specified and hence one needs to solve problem \eqref{mainpro} to find a barycenter. Note that problem \eqref{mainpro} can be considered as a problem with $X$ being one variable block and $(\bm{w},\,\{\Pi^{(t)}\})$ being the other block. Then, it is natural to apply an alternating minimization method to solve \eqref{mainpro}. Specifically, with $X$ fixed, problem \eqref{mainpro} indeed reduces to problem \eqref{subpro1} (hence \eqref{repro}), and one can call our sGS-ADMM in Algorithm \ref{sGSADMMdual} as a subroutine to solve it efficiently. On the other hand, with $(\bm{w},\,\{\Pi^{(t)}\})$ fixed, problem \eqref{mainpro} reduces to a simple quadratic optimization problem with respect to $X$ and one can easily obtain the optimal $X^*$ columnwise by computing
\begin{equation*}
\bm{x}_i^* = \left({\textstyle\sum^N_{t=1}\sum^{m_t}_{j=1}}\pi_{ij}^{(t)}\right)^{-1}{\textstyle\sum^N_{t=1}\sum^{m_t}_j}\pi_{ij}^{(t)}\bm{q}_j^{(t)}, \qquad i = 1, \cdots, m.
\end{equation*}
In fact, this alternating minimization strategy has also been used in \citep{cd2014fast,yl2014scaling,ywwl2017fast} to handle the free support case by using their proposed methods as subroutines.
\end{remark}

\section{Convergence Analysis}\label{sectcon}

In this section, we shall establish the global linear convergence of Algorithm \ref{sGSADMMdual} based on the convergence results developed in \citep{fpst2013hankel,hsz2017linear,lst2016schur}. To this end, we first write down the KKT system associated with \eqref{lagfun1} as follows:
\begin{equation}\label{KKTprimal}
\begin{aligned}
&0~\in~ \partial \delta_{\Delta_m}(\bm{w}) - \big{(}{\textstyle\sum^{N}_{t=1}}\bm{y}^{(t)}\big{)}, \\
&0~\in~ \partial \delta_{+}^t (\Pi^{(t)}) + D^{(t)} + \bm{y}^{(t)}\bm{e}^{\top}_{m_t} + \bm{e}_{m}(\bm{z}^{(t)})^{\top}, \quad\forall\,t=1,\cdots,N,  \\
&0~=~ \Pi^{(t)}\bm{e}_{m_t} - \bm{w},  \quad\forall\,t=1,\cdots,N,  \\
&0~=~ (\Pi^{(t)})^{\top}\bm{e}_m - \bm{a}^{(t)}, \quad\forall\,t=1,\cdots,N.
\end{aligned}
\end{equation}
We also write down the KKT system associated with \eqref{lagfun2} as follows:
\begin{equation}\label{KKTdual}
\begin{aligned}
&0~\in~ \partial \delta^*_{\Delta_m}(\bm{u}) - \bm{\lambda},  \\
&0~\in~ \partial \delta_{+}^t (V^{(t)}) + \Lambda^{(t)}, \quad\forall\,t=1,\cdots,N,  \\
&0~=~ \Lambda^{(t)}\bm{e}_{m_t} - \bm{\lambda},  \quad\forall\,t=1,\cdots,N,  \\
&0~=~ (\Lambda^{(t)})^{\top}\bm{e}_m - \bm{a}^{(t)}, \quad\forall\,t=1,\cdots,N,  \\
&0~=~ {\textstyle\sum^N_{t=1}}\bm{y}^{(t)} - \bm{u},  \\
&0~=~ V^{(t)} - D^{(t)} - \bm{y}^{(t)}\bm{e}^{\top}_{m_t} - \bm{e}_{m}(\bm{z}^{(t)})^{\top}, \quad\forall\,t=1,\cdots,N.
\end{aligned}
\end{equation}
Then, we show the existence of optimal solutions of problems \eqref{repro} and \eqref{subdualpro}, and their relations in the following proposition.
\begin{proposition}\label{existopt}
The following statements hold.
\begin{itemize}
\item[{\rm (i)}] The optimal solution of problem \eqref{repro} exists and the solution set of the KKT system \eqref{KKTprimal} is nonempty;

\item[{\rm (ii)}] The optimal solution of problem \eqref{subdualpro} exists and the solution set of the KKT system \eqref{KKTdual} is nonempty;

\item[{\rm (iii)}] If $\big{(}\bm{u}^*, \{V^{(t),*}\}, \{\bm{y}^{(t),*}\}, \{\bm{z}^{(t),*}\}, \bm{\lambda}^*, \{\Lambda^{(t),*}\}\big{)}$ is a solution of the KKT system \eqref{KKTdual}, then $(\bm{u}^*$, $\{V^{(t),*}\}$, $\{\bm{y}^{(t),*}\}$, $\{\bm{z}^{(t),*}\})$ solves \eqref{subdualpro} and $\left(\bm{\lambda}^*, \{\Lambda^{(t),*}\}\right)$ solves \eqref{repro}.
\end{itemize}
\end{proposition}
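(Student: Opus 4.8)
The plan is to establish all three statements by first proving existence of primal and dual optimal solutions, then invoking strong duality to connect the two KKT systems. For statement (i), I would begin by observing that problem \eqref{repro} (equivalently \eqref{subpro1}) is a linear program with a nonempty feasible region: one can explicitly construct a feasible point, for instance by taking $\bm{w} = \frac{1}{m}\bm{e}_m \in \Delta_m$ and setting $\Pi^{(t)} = \frac{1}{m}\bm{e}_m (\bm{a}^{(t)})^{\top}$, which satisfies $\Pi^{(t)}\bm{e}_{m_t} = \frac{1}{m}\bm{e}_m(\bm{a}^{(t)})^{\top}\bm{e}_{m_t} = \frac{1}{m}\bm{e}_m = \bm{w}$ (using $\sum_i a_i^{(t)} = 1$), $(\Pi^{(t)})^{\top}\bm{e}_m = \bm{a}^{(t)}$, and $\Pi^{(t)} \geq 0$. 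Since the objective is linear and bounded below on the feasible set (the feasible set is compact — each $\Pi^{(t)}$ lies in a bounded polytope because its entries are nonnegative and its column/row sums are fixed, and $\bm{w}\in\Delta_m$ is bounded), the LP attains its minimum. The existence of a KKT point then follows from standard LP duality theory, or equivalently because a feasible LP with finite optimal value and only polyhedral (indicator) nonsmooth terms satisfies the conditions under which the KKT system is solvable; no constraint qualification issue arises for polyhedral problems.

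For statement (ii), I would note that problem \eqref{subdualpro} is, by construction, the Lagrangian dual of \eqref{repro}, and its KKT system \eqref{KKTdual} is simply a reformulation of \eqref{KKTprimal} together with the definitions of the auxiliary variables $\bm{u}, \{V^{(t)}\}$ and the dual multipliers $\bm{\lambda}, \{\Lambda^{(t)}\}$. Since \eqref{repro} is a feasible LP with finite optimal value, strong duality for linear programming guarantees that the dual \eqref{subdualpro} is also solvable with the same optimal value, and its optimal solution set is nonempty. Existence of a solution to \eqref{KKTdual} then follows from the solvability of both primal and dual LPs together with the absence of a duality gap.

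Statement (iii) is where the correspondence between the two KKT systems must be made precise, and I expect this to be the main technical point. The plan is to show that the KKT system \eqref{KKTdual} for the dual problem is \emph{equivalent} to the KKT system \eqref{KKTprimal} for the primal problem, under the identification that the dual multipliers $(\bm{\lambda}, \{\Lambda^{(t)}\})$ of \eqref{subdualpro} play the role of the primal variables $(\bm{w}, \{\Pi^{(t)}\})$ of \eqref{repro}, while the variables $(\{\bm{y}^{(t)}\}, \{\bm{z}^{(t)}\})$ of \eqref{subdualpro} are exactly the dual multipliers appearing in \eqref{KKTprimal}. Concretely, I would match the inclusions line by line: using the conjugacy relation \eqref{subeqv}, namely $\bm{\lambda} \in \partial\delta_{\Delta_m}(\bm{u}) \Leftrightarrow \bm{u} \in \partial\delta^*_{\Delta_m}(\bm{\lambda})$, together with $\bm{u} = \sum_t \bm{y}^{(t)}$, one recovers $\sum_t \bm{y}^{(t)} \in \partial\delta_{\Delta_m}(\bm{\lambda})$, which is precisely the first line of \eqref{KKTprimal} with $\bm{w}$ replaced by $\bm{\lambda}$. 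Similarly, the condition $0 \in \partial\delta^t_+(V^{(t)}) + \Lambda^{(t)}$ combined with $V^{(t)} = D^{(t)} + \bm{y}^{(t)}\bm{e}_{m_t}^{\top} + \bm{e}_m(\bm{z}^{(t)})^{\top}$ and the conjugacy relation translates into $0 \in \partial\delta^t_+(\Lambda^{(t)}) + D^{(t)} + \bm{y}^{(t)}\bm{e}_{m_t}^{\top} + \bm{e}_m(\bm{z}^{(t)})^{\top}$, matching the second line of \eqref{KKTprimal} with $\Pi^{(t)}$ replaced by $\Lambda^{(t)}$. The two feasibility equations $\Lambda^{(t)}\bm{e}_{m_t} = \bm{\lambda}$ and $(\Lambda^{(t)})^{\top}\bm{e}_m = \bm{a}^{(t)}$ match the remaining two lines directly. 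The main obstacle is handling the subdifferential and conjugate-function manipulations carefully — in particular, verifying that $\delta^t_+$ is self-conjugate up to the correct sign convention and that the polyhedral indicator subdifferentials transform as claimed under \eqref{subeqv}. Once this line-by-line equivalence is established, any solution of \eqref{KKTdual} yields, via this identification, a solution of \eqref{KKTprimal}, so $(\bm{\lambda}^*, \{\Lambda^{(t),*}\})$ solves \eqref{repro}; and the fact that $(\bm{u}^*, \{V^{(t),*}\}, \{\bm{y}^{(t),*}\}, \{\bm{z}^{(t),*}\})$ solves \eqref{subdualpro} follows because a KKT point of a convex problem is optimal.
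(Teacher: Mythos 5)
Your proposal is correct and follows essentially the same path as the paper: existence via compactness of the feasible set, KKT solvability via polyhedrality (the paper cites \citep[Theorems 3.25 and 3.27]{r2006nonlinear}), and the line-by-line identification of \eqref{KKTdual} with \eqref{KKTprimal} through the conjugacy relation \eqref{subeqv}, with $\bm{\lambda}^*$ playing the role of $\bm{w}$ and $\Lambda^{(t),*}$ that of $\Pi^{(t)}$ --- your matching of the first two inclusions (including the complementarity symmetry $-\Lambda^{(t)}\in\partial\delta^t_+(V^{(t)}) \Leftrightarrow V^{(t)}\geq 0,\ \Lambda^{(t)}\geq 0,\ \langle V^{(t)},\Lambda^{(t)}\rangle=0 \Leftrightarrow -V^{(t)}\in\partial\delta^t_+(\Lambda^{(t)})$) is exactly what the paper means by ``simplifying the KKT system \eqref{KKTdual} and recalling \eqref{subeqv}''. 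Two minor deviations are worth noting. First, for feasibility in (i) you construct the independent coupling $\Pi^{(t)}=\frac{1}{m}\bm{e}_m(\bm{a}^{(t)})^{\top}$ explicitly, whereas the paper cites \citep[Lemma 2.2]{dk2013combinatorics} on nonemptiness of transportation polytopes; your construction is more elementary and self-contained, and it is easily checked to be valid. Second, in (ii) you invoke ``strong duality for linear programming,'' which is slightly loose as stated: problem \eqref{subdualpro} is not literally an LP, since $\delta^*_{\Delta_m}(\bm{u})=\max_i u_i$ is only piecewise linear, so one needs either an epigraph reformulation or polyhedral convex duality to make that appeal rigorous. The paper instead sidesteps this entirely by an explicit construction --- taking a KKT point of \eqref{KKTprimal} and setting $\bm{u}^*=\sum_{t}\bm{y}^{(t),*}$, $\bm{\lambda}^*=\bm{w}^*$, $\Lambda^{(t),*}=\Pi^{(t),*}$, $V^{(t),*}=D^{(t)}+\bm{y}^{(t),*}\bm{e}^{\top}_{m_t}+\bm{e}_m(\bm{z}^{(t),*})^{\top}$ --- and verifying \eqref{KKTdual} directly via \eqref{subeqv}; this is the same identification you use in (iii), just run in the forward direction, and adopting it in (ii) would close the only small gap in your write-up.
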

\begin{proof}
\textit{Statement} (i). Note that \eqref{repro} is equivalent to \eqref{subpro1}. Thus, we only need to show that the optimal solution of \eqref{subpro1} exists. To this end, we first claim that the feasible set of \eqref{subpro1} is nonempty. For simplicity, let
\begin{equation*}
\begin{aligned}
\mathcal{C}_{\mathrm{feas}}
&:=\big{\{}(\bm{w},\,\{\Pi^{(t)}\}): \bm{w}\in\Delta_{m}, \,\Pi^{(t)}\in\Omega^t(\bm{w}),\,t=1,\cdots,N\big{\}}, \\
\Omega^t(\bm{w})
&:=\big{\{}\Pi^{(t)}\in\mathbb{R}^{m \times m_t}: \Pi^{(t)}\bm{e}_{m_t} = \bm{w}, ~(\Pi^{(t)})^{\top}\bm{e}_{m} = \bm{a}^{(t)},~\Pi^{(t)} \geq 0\big{\}}, \quad t = 1, \cdots, N.
\end{aligned}
\end{equation*}
Recall that the simplex $\Delta_{m}$ is nonempty. Then, for any fixed $\bar{\bm{w}}\in\Delta_{m}$, consider the sets $\Omega^1(\bar{\bm{w}})$, $\cdots$, $\Omega^N(\bar{\bm{w}})$. For any $t=1,\cdots,N$, since $\bm{a}^{(t)}$ is the weight vector of the discrete probability distribution $\mathcal{P}^{(t)}$, we have that $\bm{e}_{m_t}^{\top}\bm{a}^{(t)}=1$. Using this fact and $\bm{e}^{\top}_m\bar{\bm{w}}=1$, we have from \citep[Lemma 2.2]{dk2013combinatorics} that each $\Omega^t(\bar{\bm{w}})$ is nonempty. Hence, $\mathcal{C}_{\mathrm{feas}}$ is nonempty. Moreover, it is not hard to see that $\mathcal{C}_{\mathrm{feas}}$ is closed and bounded. This together with the continuity of the objective function in \eqref{subpro1} implies that the optimal solution of \eqref{subpro1} exists. Hence, the optimal solution of \eqref{repro} exists. Now, let $(\bm{w}^*,\,\{\Pi^{(t),*}\})$ be an optimal solution of \eqref{repro}. Since the set $\{(\bm{w},\,\{\Pi^{(t)}\}): \bm{w}\in\Delta_{m}, \,\Pi^{(t)}\geq0,\,t=1,\cdots,N\}$ is a convex polyhedron and all constraint functions in \eqref{repro} are affine, then it follows from \citep[Theorem 3.25]{r2006nonlinear} that there exist multipliers $\bm{y}^{(t),*} \in \mathbb{R}^m$, $\bm{z}^{(t),*}\in\mathbb{R}^{m_t}$, $t = 1, \cdots, N$ such that $\left(\bm{w}^*, \{\Pi^{(t),*}\}, \{\bm{y}^{(t),*}\}, \{\bm{z}^{(t),*}\}\right)$ satisfies the KKT system \eqref{KKTprimal}. Thus, the solution set of the KKT system \eqref{KKTprimal} is also nonempty. This proves statement (i).

\textit{Statement} (ii). Let $\left(\bm{w}^*, \{\Pi^{(t),*}\}, \{\bm{y}^{(t),*}\}, \{\bm{z}^{(t),*}\}\right)$ be a solution of the KKT system \eqref{KKTprimal}. It follows from statement (i) that such a solution exists. Now, consider $\bm{u}^* = {\textstyle\sum^N_{t=1}}\bm{y}^{(t),*}$, $\bm{\lambda}^*=\bm{w}^*$, $\Lambda^{(t),*}=\Pi^{(t),*}$, $V^{(t),*} = D^{(t)} + \bm{y}^{(t),*}\bm{e}^{\top}_{m_t} + \bm{e}_{m}(\bm{z}^{(t),*})^{\top}$, $t=1,\cdots,N$. Then, by simple calculations and recalling \eqref{subeqv}, one can verify that $\big{(}\bm{u}^*, \{V^{(t),*}\}, \{\bm{y}^{(t),*}\}, \{\bm{z}^{(t),*}\}, \bm{\lambda}^*$, $\{\Lambda^{(t),*}\}\big{)}$ satisfies the KKT system \eqref{KKTdual}. Hence, the solution set of the KKT system \eqref{KKTdual} is nonempty. Moreover, from \citep[Theorem 3.27]{r2006nonlinear}, we see that $\big{(}\bm{u}^*, \{V^{(t),*}\}$, $\{\bm{y}^{(t),*}\}, \{\bm{z}^{(t),*}\}\big{)}$ is also an optimal solution of \eqref{subdualpro}. This shows that the optimal solution of \eqref{subdualpro} exists.

\textit{Statement} (iii). First, it is easy to see from
\citep[Theorem 3.27]{r2006nonlinear} that $(\bm{u}^*$, $\{V^{(t),*}\}$, $\{\bm{y}^{(t),*}\}$, $\{\bm{z}^{(t),*}\})$ solves problem \eqref{subdualpro}. Then, simplifying the KKT system \eqref{KKTdual} and recalling \eqref{subeqv}, one can verify that $(\bm{\lambda}^*$, $\{\Lambda^{(t),*}\}$, $\{\bm{y}^{(t),*}\}$, $\{\bm{z}^{(t),*}\})$ satisfies the KKT system \eqref{KKTprimal} with $\bm{\lambda}^*$ in place of $\bm{w}$ and $\Lambda^{(t),*}$ in place of $\Pi^{(t)}$. Now, using \citep[Theorem 3.27]{r2006nonlinear} again, we see that $\left(\bm{\lambda}^*, \{\Lambda^{(t),*}\}\right)$ is an optimal solution of \eqref{repro}. This proves statement (iii).
\end{proof}

In order to present the global convergence of Algorithm \ref{sGSADMMdual} based on the theory developed in \citep{fpst2013hankel,lst2016schur}, we first express problem \eqref{subdualpro} as follows:
\begin{equation*}
\begin{aligned}
&\min\limits_{\bm{u}, \{V^{(t)}\}, \{\bm{y}^{(t)}\}, \{\bm{z}^{(t)}\}} ~\theta\big{(}\bm{u}, \{V^{(t)}\}\big{)} + g\big{(}\{\bm{y}^{(t)}\}, \{\bm{z}^{(t)}\}\big{)} \vspace{-2mm} \\
&\hspace{1.3cm}\mathrm{s.t.}\hspace{1.3cm}
A\begin{bmatrix*}[c]
\bm{u} \\ \mathrm{vec}(V^{(1)}) \vspace{-1.5mm} \\ \vdots \\ \mathrm{vec}(V^{(N)})
\end{bmatrix*}
+ B_1
\begin{bmatrix*}[c]
\bm{y}^{(1)}  \vspace{-1.5mm} \\ \vdots \\ \bm{y}^{(N)}
\end{bmatrix*}
+ B_2
\begin{bmatrix*}[c]
\bm{z}^{(1)}  \vspace{-1.5mm} \\ \vdots \\ \bm{z}^{(N)}
\end{bmatrix*}
=
\begin{bmatrix*}[c]
0 \\ \mathrm{vec}(D^{(1)}) \vspace{-1.5mm} \\ \vdots \\ \mathrm{vec}(D^{(N)})
\end{bmatrix*},
\end{aligned}
\end{equation*}
where $\theta\big{(}\bm{u}, \{V^{(t)}\}\big{)} = \delta^*_{\Delta_m}(\bm{u}) + {\textstyle\sum^N_{t=1}} \delta_{+}^t(V^{(t)})$,
$g\big{(}\{\bm{y}^{(t)}\},\{\bm{z}^{(t)}\}\big{)}={\textstyle\sum^N_{t=1}}\langle \bm{z}^{(t)}, \,\bm{a}^{(t)}\rangle$ and
\begin{equation}\label{defmatrix}
A =
\begin{bmatrix*}[c]
-I_m &\!\!               \\
     &\!\! I_{m\sum_{t} m_t} \\
\end{bmatrix*},
\;\;
B_1 =
\begin{bmatrix*}[c]
1 &\!\!\!\! \cdots  &\!\!\!\!\! 1  \\
-\bm{e}_{m_1} &\!\!\!\!  &\!\!\!\!\!   \\
&\!\!\!\! \ddots  &\!\!\!\!\!  \vdots        \\
&\!\!\!\!   &\!\!\!\!\! -\bm{e}_{m_N}
\end{bmatrix*}\otimes I_m,
\;\;
B_2 =
\begin{bmatrix*}[c]
0 &\!\!\!\! \cdots&\!\!\!\!\! 0         \\
-I_{m_1}&\!\!\!\!  &\!\!\!\!\!       \\
&\!\!\!\!\ddots &\!\!\!\!\!  \vdots     \\
&\!\!\!\!   &\!\!\!\!\! -I_{m_N}
\end{bmatrix*} \otimes \bm{e}_{m}.
\end{equation}
It is easy to verify that $A^{\top}A = I_{m\left(1+\sum_{t} m_t\right)} \succ 0$ and
\begin{equation*}
B_1^{\top}B_1 =
\begin{bmatrix*}[c]
m_1+1 &\!\!\!         &        \\
      &\!\!\!  \ddots &        \\
      &\!\!\!         & m_N+1
\end{bmatrix*} \otimes I_m \succ 0, \quad
B_2^{\top}B_2 = m
\begin{bmatrix*}[c]
 I_{m_1} &\!\!\!         &        \\
         &\!\!\!  \ddots &        \\
         &\!\!\!         & I_{m_N}
\end{bmatrix*} \succ 0.
\quad
\end{equation*}

For notational simplicity, denote
\begin{equation*}
\begin{aligned}
&\mathcal{W}:=(\bm{u}, \{V^{(t)}\}, \{\bm{y}^{(t)}\}, \{\bm{z}^{(t)}\}, \bm{\lambda}, \{\Lambda^{(t)}\}), \\
&\mathcal{W}^k:=(\bm{u}^k, \{V^{(t),k}\}, \{\bm{y}^{(t),k}\}, \{\bm{z}^{(t),k}\}, \bm{\lambda}^k, \{\Lambda^{(t),k}\}), \\
&\bm{y}:= [\bm{y}^{(1)};\cdots;\bm{y}^{(N)}],
~~\bm{y}^k:= [\bm{y}^{(1),k};\cdots;\bm{y}^{(N),k}], \\
&\bm{z}:=[\bm{z}^{(1)};\cdots;\bm{z}^{(N)}],
~~\bm{z}^k:=[\bm{z}^{(1),k};\cdots;\bm{z}^{(N),k}], \\
&\bm{v}:=[\mathrm{vec}(V^{(1)});\cdots;\mathrm{vec}(V^{(N)})],
~~\bm{v}^k:=[\mathrm{vec}(V^{(1),k});\cdots;\mathrm{vec}(V^{(N),k})], \\
&\bm{d}:= [0;  \mathrm{vec}(D^{(1)}); \cdots ; \mathrm{vec}(D^{(N)})],
~~\mathrm{vec}(\{\Lambda^{(t)}\}):=[\mathrm{vec}(\Lambda^{(1)});\cdots;\mathrm{vec}(\Lambda^{(N)})], \\
&\mathrm{vec}(\mathcal{W}):=[\bm{u};\bm{v};\bm{y};\bm{z};\bm{\lambda};\mathrm{vec}(\{\Lambda^{(t)}\})].
\end{aligned}
\end{equation*}
By using the above notation, we can rewrite problem \eqref{subdualpro} in a compact form as follows:
\begin{equation}\label{eq-compact}
\begin{aligned}
&\min~~\theta(\bm{u}, \bm{v}) + g(\bm{y},\bm{z}) \\
&~\mbox{s.t.} ~~~ A [\bm{u}; \bm{v}] + B [\bm{y}; \bm{z}] = \bm{d},
\end{aligned}
\end{equation}
where $B = [B_1~B_2]$. Then, our sGS-ADMM (Algorithm \ref{sGSADMMdual}) is precisely a 2-block sPADMM applied to the compact form \eqref{eq-compact} of \eqref{subdualpro} with a specially designed proximal term.
In particular, {\bf Step 1} of the algorithm is the same as computing
\begin{equation}\label{eq-step11}
(\bm{u}^{k+1},\,\bm{v}^{k+1})
= \arg\min_{\bm{u},\bm{v}} \;\big\{\, \mathcal{L}_\beta(\bm{u},\bm{v},\bm{y}^k,\bm{z}^k;\bm{\lambda}^k,
\{\Lambda^{(t),k}\}) \,\big\}.
\end{equation}
It follows from \citep[Proposition 5]{lst2016schur} that {\bf Step 2a--2c} is equivalent to
\begin{equation}\label{eq-step22}
(\bm{y}^{k+1},\,\bm{z}^{k+1}) = \arg\min\limits_{\bm{y},\bm{z}}\left\{
\mathcal{L}_\beta(\bm{u}^{k+1}, \bm{v}^{k+1}, \bm{y}, \bm{z}; \bm{\lambda}^k,
\{\Lambda^{(t),k}\}) + \textstyle\frac{\beta}{2}\| [\bm{y}; \bm{z}]- [\bm{y}^k; \bm{z}^k] \|^2_C\right\},
\end{equation}
where the matrix $C$ in the proximal term is the symmetric Gauss-Seidel decomposition operator of $B^{\top} B$ and it is given by
\begin{equation*}
C=
\begin{bmatrix*}[c]
B_1^{\top}B_2\left(B_2^{\top}B_2\right)^{-1}B_2^{\top}B_1
& 0  \\
0 & 0
\end{bmatrix*}.
\end{equation*}
Based on the above fact that the sGS-ADMM can be reformulated as a 2-block sPADMM with a specially designed semi-proximal term, one can directly obtain the global convergence of Algorithm \ref{sGSADMMdual} from that of the 2-block sPADMM.

\begin{theorem}\label{conthm}
Let $\beta>0$, $\tau\in(0, \frac{1+\sqrt{5}}{2})$ and $\big{\{}\big{(}\,\bm{u}^k, \{V^{(t),k}\}, \{\bm{y}^{(t),k}\}, \{\bm{z}^{(t),k}\}, \bm{\lambda}^k, \{\Lambda^{(t),k}\}\,\big{)}\big{\}}$ be the sequence generated by the sGS-ADMM in Algorithm \ref{sGSADMMdual}. Then, the sequence $\big{\{}\big{(}\,\bm{u}^k, \{V^{(t),k}\}$, $\{\bm{y}^{(t),k}\}, \{\bm{z}^{(t),k}\}\,\big{)}\big{\}}$ converges to an optimal solution of \eqref{subdualpro} and the sequence $\big{\{}\big{(}\,\bm{\lambda}^k, \{\Lambda^{(t),k}\}\,\big{)}\big{\}}$ converges to an optimal solution of \eqref{repro}.
\end{theorem}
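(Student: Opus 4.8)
The plan is to exploit the reformulation already carried out in the excerpt, which recasts Algorithm~\ref{sGSADMMdual} as a genuine 2-block semi-proximal ADMM (sPADMM) applied to the compact problem~\eqref{eq-compact}: \textbf{Step~1} coincides with~\eqref{eq-step11}, and by \citep[Proposition~5]{lst2016schur} the combined \textbf{Step~2a--2c} coincides with~\eqref{eq-step22}, where the proximal term uses the symmetric Gauss--Seidel decomposition operator $C$ of $B^{\top}B$. Consequently the theorem reduces to invoking the convergence theorem for the 2-block sPADMM, e.g. \citep[Appendix~B]{fpst2013hankel} (see also \citep{lst2016schur}), and the substantive work is to verify that all of its hypotheses are met here.

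First I would confirm the structural assumptions. The functions $\theta(\bm{u},\{V^{(t)}\}) = \delta^*_{\Delta_m}(\bm{u}) + \sum_{t}\delta^t_{+}(V^{(t)})$ and $g(\{\bm{y}^{(t)}\},\{\bm{z}^{(t)}\}) = \sum_{t}\langle\bm{z}^{(t)},\,\bm{a}^{(t)}\rangle$ are both proper closed convex, since $\theta$ is a separable sum of a conjugate of an indicator function and indicator functions, while $g$ is linear. The existence of a KKT point of~\eqref{eq-compact}, required by the sPADMM theorem, is supplied directly by Proposition~\ref{existopt}(ii), which guarantees that the solution set of the KKT system~\eqref{KKTdual} is nonempty. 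The admissible step-size range $\tau\in(0,\frac{1+\sqrt{5}}{2})$ in the hypothesis is exactly that of the sPADMM theorem.

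The crux is the positive-definiteness requirement on the two block subproblems after the proximal terms are folded in. For the $(\bm{u},\bm{v})$-block no proximal term is added, so the relevant operator is $\beta A^{\top}A$, which is positive definite because $A^{\top}A = I_{m(1+\sum_{t}m_t)}\succ0$ as recorded in the excerpt. For the $(\bm{y},\bm{z})$-block the proximal term is $\frac{\beta}{2}\|\cdot\|_C^2$, so I must establish $B^{\top}B + C\succ0$. Writing $M = B^{\top}B$ in $2\times2$ block form with $M_{11} = B_1^{\top}B_1$, $M_{22} = B_2^{\top}B_2$ and $M_{12} = M_{21}^{\top} = B_1^{\top}B_2$, the definition of $C$ gives the congruence
\begin{equation*}
M + C =
\begin{bmatrix}
I & M_{12}M_{22}^{-1} \\ 0 & I
\end{bmatrix}
\begin{bmatrix}
M_{11} & 0 \\ 0 & M_{22}
\end{bmatrix}
\begin{bmatrix}
I & 0 \\ M_{22}^{-1}M_{21} & I
\end{bmatrix}.
\end{equation*}
Since the outer unit-triangular factors are invertible and mutually transposed, while $M_{11} = B_1^{\top}B_1\succ0$ and $M_{22} = B_2^{\top}B_2\succ0$ (both verified in the excerpt), this congruence yields $B^{\top}B + C\succ0$. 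I expect this to be the main obstacle: not because it is deep, but because it is precisely where the carefully designed separable structure and the sGS decomposition must be matched against the sPADMM hypotheses; the positive definiteness of $B_2^{\top}B_2$ (which ensures that $M_{22}^{-1}$ exists) is what makes the whole scheme admissible without any extra assumption.

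With every hypothesis verified, the 2-block sPADMM convergence theorem gives that $\{(\bm{u}^k,\bm{v}^k,\bm{y}^k,\bm{z}^k)\}$ converges to an optimal solution of~\eqref{eq-compact}, equivalently of~\eqref{subdualpro}, and that the multiplier sequence $\{(\bm{\lambda}^k,\{\Lambda^{(t),k}\})\}$ converges to an associated Lagrange multiplier, i.e. to the corresponding component of a solution of the KKT system~\eqref{KKTdual}. Finally I would apply Proposition~\ref{existopt}(iii) to identify this limiting multiplier as an optimal solution of the primal problem~\eqref{repro}, which establishes both assertions of the theorem.
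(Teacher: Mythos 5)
Your proposal is correct and takes essentially the same route as the paper: recast \textbf{Step 1} and \textbf{Step 2a--2c} as the 2-block sPADMM \eqref{eq-step11}--\eqref{eq-step22} via \citep[Proposition 5]{lst2016schur}, verify the hypotheses of \citep[Theorem B.1]{fpst2013hankel} (with Proposition \ref{existopt} supplying the nonempty KKT solution set and part (iii) identifying the limiting multiplier as an optimal solution of \eqref{repro}), and conclude. Your explicit congruence factorization establishing $B^{\top}B+C\succ0$ merely fills in a positive-definiteness detail that the paper's proof asserts without argument and verifies, via Schur complements, only later in the linear-rate analysis.
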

\begin{proof}
Here we apply the convergence result developed in \citep{fpst2013hankel} to the 2-block sPADMM outlined in \eqref{eq-step11}, \eqref{eq-step22} and {\bf Step 3} of Algorithm \ref{sGSADMMdual}. Since both $A^\top A$ and $\beta C + \beta B^\top B$ are positive definite, the conditions for ensuring the convergence of the 2-block sPADMM in \citep[Theorem B.1]{fpst2013hankel} are satisfied, thus along with Proposition \ref{existopt}, one can readily apply
\citep[Theorem B.1]{fpst2013hankel} to obtain the desired results.
\end{proof}

Moreover, based on the equivalence of our sGS-ADMM to a 2-block sPADMM, the linear convergence rate of the sGS-ADMM can also be established from the linear convergence result of the 2-block sPADMM; see \citep[Section 4.1]{hsz2017linear} for more details.

Define
\begin{equation*}
\begin{aligned}
&M
:=\begin{bmatrix*}[c]
0 & & \\
& \beta C+\beta B^{\top}B & \\
&& (\tau\beta)^{-1}I_{m\left(1+\sum^N_{t=1} m_t\right)}
\end{bmatrix*} + s_{\tau}\beta
\begin{bmatrix*}[c]
A^{\top}A & A^{\top} B & 0 \\
B^{\top}A & B^{\top}B& 0 \\
0 & 0 & 0
\end{bmatrix*},
\end{aligned}
\end{equation*}
where $A$, $B_1$, $B_2$ are defined in \eqref{defmatrix} and $s_{\tau}:=(5-\tau-3\min\{\tau,\,\tau^{-1}\})/4$. One can verify that $M\succ0$. Indeed, it is easy to see from the definition that $M\succ0$ if and only if
\begin{equation*}
M_1:=
\begin{bmatrix*}[c]
A^{\top}A & A^{\top}B \\
B^{\top}A & s_{\tau}^{-1}C + (1+s_{\tau}^{-1}) B^{\top}B
\end{bmatrix*} \succ 0.
\end{equation*}
Thus, one only needs to verify that $M_1\succ0$. Note that $A^{\top}A = AA^{\top} = I_{m\left(1+\sum^N_{t=1} m_t\right)} \succ 0$. The Schur complement of $A^{\top}A$ takes the form of
\begin{equation*}
\begin{aligned}
M_2
~:=&~s_{\tau}^{-1} C + (1+s_{\tau}^{-1}) B^{\top}B - B^{\top}A(A^{\top}A)^{-1}A^{\top}B
\;\;=~s_{\tau}^{-1}C + s_{\tau}^{-1} B^{\top}B
 \\
=&~s_{\tau}^{-1}
\begin{bmatrix*}[c]
B_1^{\top}B_2\big{(}B_2^{\top}B_2\big{)}^{-1}B_2^{\top}B_1 + B_1^{\top}B_1 & B_1^{\top}B_2  \\
B_2^{\top}B_1 & B_2^{\top}B_2
\end{bmatrix*}.
\end{aligned}
\end{equation*}
Since $B_2^{\top}B_2\succ0$ and its Schur complement satisfies
\begin{equation*}
B_1^{\top}B_2\big{(}B_2^{\top}B_2\big{)}^{-1}B_2^{\top}B_1 + B_1^{\top}B_1 - B_1^{\top}B_2\big{(}B_2^{\top}B_2\big{)}^{-1}B_2^{\top}B_1 = B_1^{\top}B_1 \succ 0,
\end{equation*}
then $M_2\succ0$. This implies that $M_1\succ0$ and hence $M\succ0$.

We also let $\mathscr{W} := \mathbb{R}^m \times \otimes_{t=1}^N\mathbb{R}^{m \times m_t} \times \mathbb{R}^m \times \otimes_{t=1}^N\mathbb{R}^{m_t} \times \mathbb{R}^m \times \otimes_{t=1}^N\mathbb{R}^{m \times m_t}$ and $\Omega\subseteq\mathscr{W}$ be the solution set of the KKT system \eqref{KKTdual}. Recall from Proposition \ref{existopt}(ii) that $\Omega$ is nonempty. Moreover, for any $\mathcal{W}\in\mathscr{W}$, we define
\begin{equation*}
\begin{aligned}
\mathrm{dist}(\mathcal{W}, \,\Omega) &:= \inf\limits_{\mathcal{W}'\in\Omega} \|\mathrm{vec}(\mathcal{W})-\mathrm{vec}(\mathcal{W}')\|, \\
\mathrm{dist}_M(\mathcal{W}, \,\Omega) &:= \inf\limits_{\mathcal{W}'\in\Omega} \|\mathrm{vec}(\mathcal{W})-\mathrm{vec}(\mathcal{W}')\|_M.
\end{aligned}
\end{equation*}
Since $M\succ0$, $\mathrm{dist}_M$ is also a point-to-set distance. We  present the linear convergence result of our sGS-ADMM in the next theorem.

\begin{theorem}\label{thmconrate}
Let $\beta>0$, $\tau\in(0, \frac{1+\sqrt{5}}{2})$ and $\left\{\mathcal{W}^k\right\}$ be the sequence generated by the sGS-ADMM in Algorithm \ref{sGSADMMdual}. Then, there exists a constant $0<\rho<1$ such that, for all $k\geq1$,
\begin{equation*}
\mathrm{dist}_M^2(\mathcal{W}^{k+1}, \,\Omega) + \beta\big{\|}[\bm{y}^{k+1}; \bm{z}^{k+1}] - [\bm{y}^k; \bm{z}^k]\big{\|}^2_{C}
\leq \rho\Big{(} \mathrm{dist}_M^2(\mathcal{W}^{k}, \,\Omega) + \beta\big{\|}[\bm{y}^{k}; \bm{z}^{k}] - [\bm{y}^{k-1}; \bm{z}^{k-1}]\big{\|}^2_{C} \Big{)}.
\end{equation*}
\end{theorem}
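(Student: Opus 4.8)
The plan is to exploit the equivalence, already established in the excerpt, between the sGS-ADMM in Algorithm~\ref{sGSADMMdual} and a 2-block sPADMM applied to the compact reformulation~\eqref{eq-compact}, with block $(\bm{u},\bm{v})$ governed by $\theta$, block $(\bm{y},\bm{z})$ governed by $g$, the specially designed proximal operator $C$ appearing in~\eqref{eq-step22}, and dual step-size $\tau$. The quantity $\mathrm{dist}_M^2(\mathcal{W}^k,\Omega)+\beta\|[\bm{y}^k;\bm{z}^k]-[\bm{y}^{k-1};\bm{z}^{k-1}]\|_C^2$ appearing on both sides of the claimed inequality is precisely the merit (Lyapunov) function used in the linear convergence analysis of the 2-block sPADMM in \citep[Section~4.1]{hsz2017linear}. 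Hence the theorem amounts to showing that this merit function contracts by a factor $\rho\in(0,1)$ per iteration, and the whole proof reduces to verifying the hypotheses under which the framework of \citep{hsz2017linear} delivers exactly such a contraction.

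First I would collect the structural facts already available: the existence of a KKT solution and the nonemptiness of $\Omega$ from Proposition~\ref{existopt}(ii); the positive definiteness $M\succ0$ verified above, which in particular guarantees that both $A^\top A$ and $\beta C+\beta B^\top B$ are positive definite, so the proximal terms are genuinely strongly convex in the relevant blocks; and the convergence of the full sequence $\{\mathcal{W}^k\}$ from Theorem~\ref{conthm}, which ensures the iterates remain in a bounded region. These are exactly the standing assumptions of the sPADMM linear-rate theory, save for one remaining ingredient: an error bound (equivalently, a calmness/metric subregularity property) for the KKT residual mapping associated with~\eqref{eq-compact}.

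The crux, and the step I expect to be the main obstacle, is verifying this error bound unconditionally. The key observation is that~\eqref{eq-compact} is in essence a linear program: the coupling constraint is affine, the function $g(\bm{y},\bm{z})=\sum_{t=1}^N\langle\bm{z}^{(t)},\bm{a}^{(t)}\rangle$ is linear, and $\theta(\bm{u},\bm{v})=\delta^*_{\Delta_m}(\bm{u})+\sum_{t=1}^N\delta_{+}^t(V^{(t)})$ is a \emph{polyhedral} convex function, since $\Delta_m$ is a polyhedron (so its support function $\delta^*_{\Delta_m}$ is polyhedral) and each $\delta_{+}^t$ is the indicator of a polyhedral cone. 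Consequently $\partial\theta$, $\partial g$, and the associated KKT set-valued mapping all have graphs that are finite unions of polyhedra, i.e.\ they are polyhedral multifunctions. By the classical fact that polyhedral multifunctions are locally upper Lipschitz continuous at every point (a Hoffman-type error bound), the KKT residual mapping satisfies the required error bound on any bounded set, and in particular along the bounded sequence $\{\mathcal{W}^k\}$. This is precisely why the linear rate holds \emph{without any additional condition}: the polyhedrality of the underlying LP supplies the error bound for free. The delicate part is to confirm that the residual mapping whose polyhedrality we exploit is exactly the one entering the hypothesis of \citep[Section~4.1]{hsz2017linear}, and that the conjugate term $\delta^*_{\Delta_m}$ is correctly handled so that polyhedrality is preserved through the dual reformulation~\eqref{subdualpro}.

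With the error bound in hand, together with $M\succ0$ and the nonemptiness of $\Omega$, all hypotheses of the linear convergence theorem for the 2-block sPADMM in \citep[Section~4.1]{hsz2017linear} are met. Applying it to the sPADMM reformulation~\eqref{eq-step11}--\eqref{eq-step22} together with \textbf{Step~3} of Algorithm~\ref{sGSADMMdual} then yields a constant $\rho\in(0,1)$ for which the stated contraction of the merit function holds for all $k\geq1$, which is the desired conclusion.
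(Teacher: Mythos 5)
Your proposal is correct and follows essentially the same route as the paper: recast the algorithm as the 2-block sPADMM \eqref{eq-step11}--\eqref{eq-step22}, obtain the error bound for the KKT residual unconditionally from polyhedrality via Robinson's result on polyhedral multifunctions \citep{Robinson}, and then invoke \citep[Corollary 1]{hsz2017linear} together with $M\succ0$, Proposition \ref{existopt} and Theorem \ref{conthm}. The only cosmetic difference is that the paper instantiates the residual explicitly as the projection-based KKT mapping $\mathcal{R}$, which is piecewise polyhedral because $\mathrm{Pr}_{\Delta_m}(\cdot)$ and $\mathrm{Pr}_{+}^t(\cdot)$ are, whereas you argue polyhedrality of the set-valued KKT map directly---two formulations of the same idea.
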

\begin{proof}
First we note the equivalence of the sGS-ADMM to a 2-block sPADMM. Next consider the KKT mapping $\mathcal{R}: \mathscr{W} \to \mathscr{W}$ defined by
\begin{equation*}
\mathcal{R}(\mathcal{W})
:=
\left(\begin{array}{c}
\bm{\lambda} - \mathrm{Pr}_{\Delta_m}(\bm{\lambda}+\bm{u}) \\
\left\{V^{(t)} - \mathrm{Pr}_{+}^t (V^{(t)}-\Lambda^{(t)})\right\}   \\
\left\{\Lambda^{(t)}\bm{e}_{m_t} - \bm{\lambda}\right\}   \\
\left\{(\Lambda^{(t)})^{\top}\bm{e}_m - \bm{a}^{(t)}\right\} \\
{\textstyle\sum^N_{t=1}}\bm{y}^{(t)} - \bm{u}  \\
\left\{V^{(t)} - D^{(t)} - \bm{y}^{(t)}\bm{e}^{\top}_{m_t} - \bm{e}_{m}(\bm{z}^{(t)})^{\top}\right\}
\end{array}\right), \quad \forall\,\mathcal{W} \in \mathscr{W},
\end{equation*}
where $\mathrm{Pr}_{\Delta_m}(\cdot)$ denotes the projection operator over $\Delta_m$ and $\mathrm{Pr}_{+}^t(\cdot)$ denotes the projection operator over $\mathbb{R}^{m\times m_t}_+$ for $t=1,\cdots,N$. It is easy to see that $\mathcal{R}(\cdot)$ is continuous on $\mathscr{W}$. Moreover, note that $\bm{\lambda}\in\partial \delta^*_{\Delta_m}(\bm{u})\Longleftrightarrow
\bm{u}\in\partial\delta_{\Delta_m}(\bm{\lambda})\Longleftrightarrow
0\in\partial\delta_{\Delta_m}(\bm{\lambda})+\bm{\lambda}-(\bm{\lambda}+\bm{u})
\Longleftrightarrow \bm{\lambda}=\mathrm{Prox}_{\delta_{\Delta_m}}(\bm{\lambda}+\bm{u})
=\mathrm{Pr}_{\Delta_m}(\bm{\lambda}+\bm{u})$, where the first equivalence follows from \eqref{subeqv}. Similarly, $-\Lambda^{(t)}\in\partial \delta_{+}^t (V^{(t)}) \Longleftrightarrow V^{(t)}=(V^{(t)}-\Lambda^{(t)})_+$, where $(\cdot)_+=\max(\cdot\,, 0)$. Using these facts, one can easily see that $\mathcal{R}(\mathcal{W})=0$ if and only if $\mathcal{W} \in \Omega$. By Theorem \ref{conthm}, we know that the sequence $\{ \mathcal{W}^k\}$ converges to an optimal solution $\mathcal{W}^*\in \Omega$, and hence $\mathcal{R}(\mathcal{W}^*)=0.$

Now, since $\Delta_m$, $\mathbb{R}^{m\times m_1}_+, \cdots, \mathbb{R}^{m\times m_N}_+$ are polyhedral, it follows from \citep[Example 11.18]{rw1998variational} and the definition of projections that $\mathrm{Pr}_{\Delta_m}(\cdot)$ and $\mathrm{Pr}_{+}^t(\cdot)$ are piecewise polyhedral. Hence, $\mathcal{R}(\cdot)$ is also piecewise polyhedral.
From \citep{Robinson}, we know that the KKT mapping $\mathcal{R}$ satisfies the following error bound condition:
there exist two positive scalars  $\eta>0$ and  $\tilde{\rho}>0$ such that
\begin{equation*}
{\rm dist}(\mathcal{W},\,\Omega) \;\leq\; \eta\|\mathrm{vec}({\mathcal{R}(\mathcal{W})})\|, \quad \forall\;
\mathcal{W} \in \{ \mathcal{W} \mid \|\mathrm{vec}({\mathcal{R}(\mathcal{W})})\| \leq \tilde{\rho}\},
\end{equation*}
where $\mathrm{vec}({\mathcal{R}(\mathcal{W})})$ denotes the vectorization of $\mathcal{R}(\mathcal{W})$.

Finally, based on the above facts and Proposition \ref{existopt}, we can apply \citep[Corollary 1]{hsz2017linear} to obtain the desired results.
\end{proof}

\section{Numerical Experiments}\label{sectnum}

In this section, we conduct numerical experiments to test our sGS-ADMM in Algorithm \ref{sGSADMMdual} for computing Wasserstein barycenters with pre-specified support points, i.e., solving problem \eqref{subpro1}. In all our experiments, we use the 2-Wasserstein distance. We also compare our sGS-ADMM with the commercial software Gurobi and two existing representative methods, namely, the iterative Bregman projection (IBP) method \citep{bccnp2015iterative} and the modified Bregman ADMM (BADMM) \citep{ywwl2017fast}. For ease of future reference, we briefly recall IBP and BADMM in Appendices \ref{apdIBP} and \ref{apdbadmm}, respectively. All experiments are run in {\sc Matlab} R2016a on a workstation with Intel(R) Xeon(R) Processor E-2176G@3.70GHz (this processor has 6 cores and 12 threads) and 64GB of RAM, equipped with 64-bit Windows 10 OS.

\subsection{Implementation Details}\label{secImple}

In our implementation of the sGS-ADMM, a data scaling technique is used. Let $\kappa=\|[D^{(1)}, \cdots, D^{(N)}]\|_F$. Then, problem \eqref{subpro1} is equivalent to
\begin{equation}\label{scalpro}
\begin{aligned}
&\min\limits_{\bm{w},\,\{\Pi^{(t)}\}}~{\textstyle\sum^N_{t=1}}\langle \widehat{D}^{(t)}, \,\Pi^{(t)} \rangle  \\
&\hspace{0.5cm}\mathrm{s.t.} \hspace{0.5cm} \Pi^{(t)}\bm{e}_{m_t} = \bm{w}, ~(\Pi^{(t)})^{\top}\bm{e}_{m} = \bm{a}^{(t)},~\Pi^{(t)} \geq 0, ~~\forall\, t = 1, \cdots, N, \\
&\hspace{1.5cm} \bm{e}^{\top}_m\bm{w} = 1, ~\bm{w} \geq 0,
\end{aligned}
\end{equation}
where $\widehat{D}^{(t)} = \kappa^{-1} D^{(t)}$ for $t=1,\cdots,N$. We then apply the sGS-ADMM to solve the dual problem of \eqref{scalpro} to obtain an optimal solution of \eqref{subpro1}. Indeed, this technique has been widely used in ADMM-based methods to improve their numerical performances; see, for example, \citep{lmst2017fast}. Its effectiveness has also been observed in our experiments.

For a set of vectors $\{ \bm{a}^{(t)} \!\mid\! t=1,\!\cdots\!,N\}$, we define the notation $\|\{ \bm{a}^{(t)}\}\| :=
\big{(}\sum_{t=1}^N \| \bm{a}^{(t)}\|^2\big{)}^{\frac{1}{2}}$. Similarly, for a set of matrices $\{ A^{(t)} \mid t=1,\ldots, N\}$, we define the notation $\|\{ A^{(t)}\}\|_F := \big{(}\sum_{t=1}^N \| A^{(t)}\|_F^2\big{)}^{\frac{1}{2}}$. For any $\bm{u}, \{V^{(t)}\}, \{\bm{y}^{(t)}\}, \{\bm{z}^{(t)}\}, \bm{\lambda}, \{\Lambda^{(t)}\}$, we define the relative residuals based on the KKT system \eqref{KKTdual} as follows:
\begin{equation*}
\begin{array}{ll}
~~\eta_1(\bm{\lambda},\bm{u}) = {\textstyle\frac{\left\|\bm{\lambda}-\mathrm{Pr}_{\Delta_m}(\bm{\lambda}+\bm{u})\right\|}{1+\|\bm{\lambda}\|+\|\bm{u}\|}},
&\eta_2(\{V^{(t)}\},\{\Lambda^{(t)}\}) =
{\textstyle\frac{\|\{V^{(t)}-(V^{(t)}-\Lambda^{(t)})_+\}\|_F}{1+\|\{V^{(t)}\}\|_F+\|\{\Lambda^{(t)}\}\|_F}}, \\[8pt]
~~\eta_3(\bm{\lambda},\{\Lambda^{(t)}\}) = {\textstyle\frac{\|\{\Lambda^{(t)}\bm{e}_{m_t}-\bm{\lambda}\}\|}{1+\|\bm{\lambda}\|+\|\{\Lambda^{(t)}\}\|_F}},
&\eta_4(\{\Lambda^{(t)}\}) = {\textstyle\frac{\|\{(\Lambda^{(t)})^{\top}\bm{e}_m - \bm{a}^{(t)}\}\|_F}{1+\|\{\bm{a}^{(t)}\}\|+\|\{\Lambda^{(t)}\}\|_F}},
\\[8pt]
~~\eta_5(\bm{u},\{\bm{y}^{(t)}\}) = {\textstyle\frac{\|\sum^N_{t=1}\bm{y}^{(t)} - \bm{u}\|}{1+\|\sum^N_{t=1}\bm{y}^{(t)}\|+\|\bm{u}\|}},
&\eta_6(\{V^{(t)}\}, \{\bm{y}^{(t)}\}, \{\bm{z}^{(t)}\}) =
{\textstyle\frac{\|\{V^{(t)}-D^{(t)}-\bm{y}^{(t)}\bm{e}^{\top}_{m_t}-\bm{e}_{m}(\bm{z}^{(t)})^{\top}\}\|_F}{1+\|\{D^{(t)}\}\|_F
+\|\{V^{(t)}\}\|_F+\|\{\bm{y}^{(t)}\}\|+\|\{\bm{z}^{(t)}\}\|} }, \\[8pt]
~~\eta_7(\bm{\lambda}) = {\textstyle\frac{|\bm{e}_m^{\top}\bm{\lambda}-1|+\|\mathrm{min}(\bm{\lambda},\,0)\|}{1+\|\bm{\lambda}\|}},
&\eta_8(\{\Lambda^{(t)}\}) = {\textstyle\frac{\|\mathrm{min}([\Lambda^{(1)}, \cdots, \Lambda^{(N)}], \,0)\|_F}{1+\|\{\Lambda^{(t)}\}\|_F}}.
\end{array}
\end{equation*}
Moreover, let $\mathcal{W}=(\bm{u}, \{V^{(t)}\}, \{\bm{y}^{(t)}\}, \{\bm{z}^{(t)}\}, \bm{\lambda}, \{\Lambda^{(t)}\})$ and
\begin{equation*}
\begin{aligned}
\eta_P(\mathcal{W}) &= \max\big{\{}\eta_1(\bm{\lambda},\bm{u}),\,0.7\eta_2\big{(}\{V^{(t)}\},\{\Lambda^{(t)}\}\big{)},
\,\eta_3\big{(}\bm{\lambda},\{\Lambda^{(t)}\}\big{)},\,\eta_4\big{(}\{\Lambda^{(t)}\}\big{)}\big{\}}, \\
\eta_D(\mathcal{W}) &= \max\big{\{}0.7\eta_5\big{(}\bm{u},\{\bm{y}^{(t)}\}\big{)},\,\eta_6\big{(}\{V^{(t)}\}, \{\bm{y}^{(t)}\}, \{\bm{z}^{(t)}\}\big{)},\,\eta_7(\bm{\lambda}),\,0.7\eta_8\big{(}\{\Lambda^{(t)}\}\big{)}\big{\}}.
\end{aligned}
\end{equation*}
Following discussions in Theorem \ref{thmconrate}, it is easy to verify that $\max\{\eta_P(\mathcal{W}), \eta_D(\mathcal{W})\}=0$ if and only if $\mathcal{W}$ is a solution of the KKT system \eqref{KKTdual}. The relative duality gap is defined by
\begin{equation*}
\eta_{gap}(\mathcal{W}) := \frac{|\,\mathrm{obj}_{P}(\mathcal{W})-\mathrm{obj}_{D}(\mathcal{W})\,|}
{1+|\,\mathrm{obj}_{P}(\mathcal{W})\,|+|\,\mathrm{obj}_{D}(\mathcal{W})\,|},
\end{equation*}
where $\mathrm{obj}_{P}(\mathcal{W})={\textstyle\sum^N_{t=1}}\langle D^{(t)}, \,\Pi^{(t)} \rangle$ and $\mathrm{obj}_{D}(\mathcal{W})=\delta^*_{\Delta_m}\big{(}{\textstyle\sum^N_{t=1}\bm{y}^{(t)}}\big{)} + {\textstyle\sum^N_{t=1}}\langle \bm{z}^{(t)}, \,\bm{a}^{(t)}\rangle$. We use these relative residuals in our stopping criterion for the sGS-ADMM. Specifically, we will terminate the sGS-ADMM when
\begin{equation*}
\max\big{\{}\eta_P(\mathcal{W}^{k+1}), \,\eta_D(\mathcal{W}^{k+1}), \,\eta_{gap}(\mathcal{W}^{k+1})\big{\}} < \mathrm{Tol}_{\mathrm{sgs}},
\end{equation*}
where $\mathcal{W}^{k+1}$ is generated by the sGS-ADMM at the $k$-th iteration and the value of $\mathrm{Tol}_{\mathrm{sgs}}$ will be given later.

We also use a similar numerical strategy as \citep[Section 4.4]{lmst2017fast} to update the penalty parameter $\beta$ in the augmented Lagrangian function at every 50 iterations. Specifically, set $\beta_0=1$. At the $k$-th iteration, if $\mathrm{mod}(k,\,50)\neq0$, set $\beta_{k+1}=\beta_k$; otherwise, compute $\chi^{k+1}=\frac{\eta_D(\mathcal{W}^{k+1})}{\eta_P(\mathcal{W}^{k+1})}$ and then, set
\begin{equation*}
\beta_{k+1} = \left\{
\begin{array}{ll}
\sigma\beta_k,        & \mathrm{if}~~\chi^{k+1}>2, \\
\sigma^{-1}\beta_k,   & \mathrm{if}~~\frac{1}{\chi^{k+1}}>2, \\
\beta_k,              & \mathrm{otherwise},
\end{array}\right.
~~ \mathrm{with} ~~
\sigma = \left\{
\begin{array}{ll}
1.1,   & \mathrm{if}~~\max\{\chi^{k+1},\,\frac{1}{\chi^{k+1}}\} \leq 50, \\
2,     & \mathrm{if}~~ \max\{\chi^{k+1},\,\frac{1}{\chi^{k+1}}\} > 500, \\
1.5,   & \mathrm{otherwise},
\end{array}\right.
\end{equation*}
where $\mathrm{mod}(k,\,50)$ denotes the remainder after division of $k$ by $50$. Note that the value of $\beta$ is adjusted based on the primal and dual information. As observed from our experiments, this updating strategy can efficiently balance the convergence of the primal and dual variables, and improve the convergence speed of our algorithm.

Computing all the above residuals is expensive. Thus, in our implementations, we only compute them and check the termination criteria at every 50 iterations. In addition, we initialize the sGS-ADMM at origin and choose the dual step-size $\tau$ to be 1.618.

For IBP, the regularization parameter $\varepsilon$ is chosen from $\{0.1, 0.01, 0.001\}$ in our experiments. For $\varepsilon\in\{0.1,0.01\}$, we follow \citep[Remark 3]{bccnp2015iterative} to implement the algorithm (see \eqref{iterschemeIBP}) and terminate it when
\begin{equation*}
{\textstyle\frac{\|\bm{w}^{k+1}-\bm{w}^k\|}{1+\|\bm{w}^{k+1}\|+\|\bm{w}^k\|} < \mathrm{Tol}_{\mathrm{ibp}}}, ~
{\textstyle\frac{\|\{\bm{u}^{(t),k+1}-\bm{u}^{(t),k}\}\|}{1+\|\{\bm{u}^{(t),k+1}\}\|+\|\{\bm{u}^{(t),k}\}\|} < \mathrm{Tol}_{\mathrm{ibp}}}, ~
{\textstyle\frac{\|\{\bm{v}^{(t),k+1}-\bm{v}^{(t),k}\}\|_F}{1+\|\{\bm{v}^{(t),k+1}\}\|+\|\{\bm{v}^{(t),k}\}\|} < \mathrm{Tol}_{\mathrm{ibp}}},
\end{equation*}
where $(\bm{w}^{k+1}, \{\bm{u}^{(t),k+1}\}, \{\bm{v}^{(t),k+1}\})$ is generated at the $k$-th iteration in \eqref{iterschemeIBP}. Moreover, for $\varepsilon=0.001$, we follow \citep[Section 4.4]{pc2019computational} to adapt the \textit{log-sum-exp} trick for stabilizing IBP (see \eqref{iterschemeIBPlog}). This stabilized IBP is terminated when
\begin{equation*}
{\textstyle\frac{\|\tilde{\bm{w}}^{k+1}-\tilde{\bm{w}}^k\|}{1+\|\tilde{\bm{w}}^{k+1}\|+\|\tilde{\bm{w}}^k\|} < \mathrm{Tol}_{\mathrm{ibp}}}, ~
{\textstyle\frac{\|\{\tilde{\bm{u}}^{(t),k+1}-\tilde{\bm{u}}^{(t),k}\}\|}{1+\|\{\tilde{\bm{u}}^{(t),k+1}\}\|+\|\{\tilde{\bm{u}}^{(t),k}\}\|} < \mathrm{Tol}_{\mathrm{ibp}}}, ~
{\textstyle\frac{\|\{\tilde{\bm{v}}^{(t),k+1}-\tilde{\bm{v}}^{(t),k}\}\|_F}{1+\|\{\tilde{\bm{v}}^{(t),k+1}\}\|+\|\{\tilde{\bm{v}}^{(t),k}\}\|} < \mathrm{Tol}_{\mathrm{ibp}}},
\end{equation*}
where $(\tilde{\bm{w}}^{k+1}, \{\tilde{\bm{u}}^{(t),k+1}\}, \{\tilde{\bm{v}}^{(t),k+1}\})$ is generated at the $k$-th iteration in \eqref{iterschemeIBPlog}. The value of $\mathrm{Tol}_{\mathrm{ibp}}$ will be given later.

For BADMM, we use the Matlab codes\footnote{Available in \url{https://github.com/bobye/WBC_Matlab}.} implemented by the authors in \citep{ywwl2017fast} and terminate them when
\begin{equation*}
\begin{array}{ll}
\max\big{\{}\eta_3\big{(}\bm{w}^{k+1}, \{\Gamma^{(t),k+1}\}\big{)}, \,\eta_4(\{\Pi^{(t),k+1}\})\big{\}} < \mathrm{Tol}_{\mathrm{b}},
&\frac{\|\bm{w}^{k+1}-\bm{w}^k\|}{1+\|\bm{w}^{k+1}\|+\|\bm{w}^k\|} < \mathrm{Tol}_{\mathrm{b}}, \\[8pt]
{\frac{\|\{\Pi^{(t),k+1}-\Gamma^{(t),k+1}\}\|_F}{1+\|\{\Pi^{(t),k+1}\}\|_F+\|\{\Gamma^{(t),k+1}\}\|_F} < \mathrm{Tol}_{\mathrm{b}}},
&{\frac{\|\{\Pi^{(t),k+1}-\Pi^{(t),k}\}\|_F}{1+\|\{\Pi^{(t),k}\}\|_F+\|\{\Pi^{(t),k+1}\}\|_F} < \mathrm{Tol}_{\mathrm{b}}}, \\[8pt]
{\frac{\|\{\Gamma^{(t),k+1}-\Gamma^{(t),k}\}\|_F}{1+\|\{\Gamma^{(t),k}\}\|_F+\|\{\Gamma^{(t),k+1}\}\|_F} < \mathrm{Tol}_{\mathrm{b}}},
&{\frac{\|\{\Lambda^{(t),k+1}-\Lambda^{(t),k}\}\|_F}{1+\|\{\Lambda^{(t),k}\}\|_F+\|\{\Lambda^{(t),k+1}\}\|_F} < \mathrm{Tol}_{\mathrm{b}}},
\end{array}
\end{equation*}
where $(\bm{w}^{k+1}, \{\Pi^{(t),k+1}\}, \{\Gamma^{(t),k+1}\}, \{\Lambda^{(t),k+1}\})$ is generated by BADMM at the $k$-th iteration (see Appendix \ref{apdbadmm}) and the value of $\mathrm{Tol}_{\mathrm{b}}$ will be given later. The above termination criteria are checked at every 200 iterations.

We also apply Gurobi 8.0.0 \citep{gurobi} to solve problem \eqref{subpro1}. It is well known that Gurobi is a highly powerful commercial package for solving linear programming problems and can provide high quality solutions. Therefore, we use the solution obtained by Gurobi as a benchmark to evaluate the qualities of solutions obtained by different methods. In our experiments, we use the default parameter settings for Gurobi. Note that, by the default settings, Gurobi actually uses a concurrent optimization strategy to solve LPs, which runs multiple classical LP solvers (the primal/dual simplex method and the barrier method) on multiple threads simultaneously and chooses the one that finishes first.

We shall conduct the experiments as follows. In subsection \ref{subsecnum}, we test different methods on synthetic data to show their computational performance in terms of accuracy and speed. In subsection \ref{secMNIST}, we test on the MNIST data set to visualize the quality of results obtained by each method. In subsection \ref{secFree}, we conduct some experiments for the free support case.
A summary of our experiments is given in subsection \ref{secNumsum}.

\subsection{Experiments on Synthetic Data}\label{subsecnum}

In this subsection, we generate a set of discrete probability distributions $\{\mathcal{P}^{(t)}\}_{t=1}^N$ with $\mathcal{P}^{(t)}=\big{\{}(a_i^{(t)},\,\bm{q}_i^{(t)}) \in \mathbb{R}_+\times\mathbb{R}^d: i = 1, \cdots, m_t\big{\}}$ and $\sum^{m_t}_{i=1}a_i^{(t)}=1$, and then apply different methods to solve problem \eqref{subpro1} to compute a Wasserstein barycenter $\mathcal{P}=\big{\{}(w_i,\,\bm{x}_i)\in\mathbb{R}_+\times\mathbb{R}^d: i = 1, \cdots, m\big{\}}$, where $m$ and $(\bm{x}_1,\cdots,\bm{x}_m)$ are pre-specified. Specifically, we set $d=3$, $\gamma_1=\cdots=\gamma_N=\frac{1}{N}$ and $m_1=\cdots=m_N=m'$ for convenience, and choose different $(N, m, m')$. Then, given each triple $(N,m,m')$, we randomly generate a trial in the following three cases.
\begin{itemize}
\item \textbf{Case 1.} \textbf{Each distribution has different \textit{dense} weights (all weights are nonzero) and different support points.} In this case, we first generate the support points $\{\bm{q}_i^{(t)}:i=1,\cdots,m', \,t=1,\cdots,N\}$ whose entries are drawn from a Gaussian mixture distribution via the following {\sc Matlab} commands:
    \vspace{-2mm}
    \begin{verbatim}
    gm_num = 5; gm_mean = [-20; -10; 0; 10; 20];
    sigma = zeros(1,1,gm_num); sigma(1,1,:) = 5*ones(gm_num,1);
    gm_weights = rand(gm_num,1);
    distrib = gmdistribution(gm_mean, sigma, gm_weights);
    \end{verbatim}
    \vspace{-6mm}
    Next, for each $t$, we generate an associated weight vector $(a_1^{(t)}, \cdots, a_{m'}^{(t)})$ whose entries are drawn from the standard uniform distribution on the open interval $(0,1)$, and then normalize it so that $\sum^{m'}_{i=1}a_i^{(t)}=1$. After generating all $\{\mathcal{P}^{(t)}\}_{t=1}^N$, we use the $k$-means\footnote{In our experiments, we call the {\sc Matlab} function ``\texttt{kmeans}", which is built in statistics and machine learning toolbox.} method to choose $m$ points from $\{\bm{q}_i^{(t)}:i=1,\cdots,m', \,t=1,\cdots,N\}$ to be the support points of the barycenter.

\item \textbf{Case 2.} \textbf{Each distribution has different \textit{sparse} weights (most of weights are zeros) and different support points.} In this case, we also generate the support points $\{\bm{q}_i^{(t)}:i=1,\cdots,m', \,t=1,\cdots,N\}$ whose entries are drawn from a Gaussian mixture distribution as in \textbf{Case 1}. Next, for each $t$, we choose a subset $\mathcal{S}_t\subset\{1, \cdots, m'\}$ of size $s$ uniformly at random and generate an $s$-sparse weight vector $(a_1^{(t)}, \cdots, a_{m'}^{(t)})$, which has uniformly distributed entries in the interval $(0,1)$ on $\mathcal{S}_t$ and zeros on $\mathcal{S}^c_t$. Then, we normalize it so that $\sum^{m'}_{i=1}a_i^{(t)}=1$. The number $s$ is set to be $\lfloor m' \times {\tt sr} \rfloor$, where ${\tt sr}$ denotes the sparsity ratio and $\lfloor a \rfloor$ denotes the greatest integer less than or equal to $a$. The number $m$ is set to be larger than $s$. The support points of the barycenter are chosen from $\{\bm{q}_i^{(t)} : a_i^{(t)}\neq0, \,i=1,\cdots,m', \,t=1,\cdots,N\}$ by the $k$-means method. Note that, in this case, one can solve a smaller problem \eqref{redsubpro1} to obtain an optimal solution of \eqref{subpro1}; see Remark \ref{rem}.

\item \textbf{Case 3.} \textbf{Each distribution has different \textit{dense} weights (all weights are nonzero), but has the same support points.} In this case, we set $m=m'$ and generate the points $(\bm{q}_1,\cdots,\bm{q}_{m})$ whose entries are drawn from a Gaussian mixture distribution as in \textbf{Case 1}. Then, all distributions $\{\mathcal{P}^{(t)}\}_{t=1}^N$ and the barycenter use $(\bm{q}_1,\cdots,\bm{q}_{m})$ as the support points. Next, for each $t$, we generate an associated weight vector $(a_1^{(t)}, \cdots, a_{m}^{(t)})$ whose entries are drawn from the standard uniform distribution on the open interval $(0,1)$, and then normalize it so that $\sum^{m}_{i=1}a_i^{(t)}=1$.
\end{itemize}

Tables \ref{ResTableDS},\,\ref{ResTableDS_sparse},\,\ref{ResTableSS} present numerical results of different methods for \textbf{Cases 1},\,\textbf{2},\,\textbf{3}, respectively, where we use different choices of $(N, \,m, \,m')$ and different sparsity ratio ${\tt sr}$. In this part of experiments, we set $\mathrm{Tol}_{\mathrm{sgs}}=\mathrm{Tol}_{\mathrm{b}}=10^{-5}$ and $\mathrm{Tol}_{\mathrm{ibp}}=10^{-8}$ for termination. We also set the maximum numbers of iterations for sGS-ADMM, BADMM and IBP to 3000, 3000, 10000, respectively. In each table, ``normalized obj" denotes the normalized objective value defined by $\frac{\left|\mathcal{F}(\{\Pi^{(t),*}\}) - \mathcal{F}_{\mathrm{gu}}\right|}{\mathcal{F}_{\mathrm{gu}}}$, where $\mathcal{F}(\{\Pi^{(t),*}\}):={\textstyle\sum^N_{t=1}}\langle D^{(t)}, \,\Pi^{(t),*} \rangle$ with $(\bm{w}^*,\,\{\Pi^{(t),*}\})$ being the terminating solution obtained by each algorithm and $\mathcal{F}_{\mathrm{gu}}$ denotes the objective value obtained by Gurobi; ``feasibility" denotes the value of
\begin{equation*}
\eta_{\mathrm{feas}}\big{(}\bm{w}^*,\{\Pi^{(t),*}\}\big{)}:=\max\big{\{}\eta_3\big{(}\bm{w}^*, \{\Pi^{(t),*}\}\big{)}, \,\eta_4\big{(}\{\Pi^{(t),*}\}\big{)},\,\eta_7(\bm{w}^*), \,\eta_8\big{(}\{\Pi^{(t),*}\}\big{)}\big{\}},
\end{equation*}
which is used to measure the deviation of the terminating solution from the feasible set; ``time" denotes the computational time (in seconds); ``iter" denotes the number of iterations. All results presented are the average of 10 independent trials.

One can observe from Tables \ref{ResTableDS},\,\ref{ResTableDS_sparse},\,\ref{ResTableSS} that our sGS-ADMM performs much better than BADMM and IBP ($\varepsilon=0.001$) in the sense that it always returns an objective value considerably closer to that of Gurobi while achieving comparable feasibility accuracy in less computational time. For IBP with $\varepsilon\in\{0.1,0.01\}$, it always converges faster and achieves better feasibility accuracy, but it gives a rather poor objective value, which means that the solution obtained is rather crude. Although a small $\varepsilon=0.001$ can give a better approximation, it may also lead to the numerical instability. The \textit{log-sum-exp} stabilization trick can be used to ameliorate this issue. However, with this trick, IBP (see \eqref{iterschemeIBPlog}) must give up some computational efficiency in matrix-vector multiplications and require many additional exponential evaluations that are typically time-consuming. Moreover, when $\varepsilon$ is small, the convergence of IBP becomes quite slow, as evident in three tables. For BADMM, it can give an objective value close to that of Gurobi. However, it takes much more time and its feasibility accuracy is the worst for most cases. Thus, the performance of BADMM is still not good enough. Moreover, the convergence of BADMM is still unknown. For Gurobi, when $N$, $m$ and $m'$ are relatively small, it can solve the problem highly efficiently. However, when the problem size becomes larger, Gurobi would take much more time. As an example, for the case where $(N,\,m,\,m')=(100,\,300,\,200)$ in Table \ref{ResTableDS}, one would need to solve a large-scale LP containing 6000300 nonnegative variables and 50001 equality constraints. In this case, we see that Gurobi is about 20 times slower than our sGS-ADMM.

\begin{table}[ht]
\setlength{\belowcaptionskip}{7pt}
\captionsetup{width=14.5cm}
\renewcommand\arraystretch{1.15}
\caption{Numerical results on synthetic data for \textbf{Case 1}. In this case, each distribution has different \textit{dense} weights and different support points. In the table, ``sGS" stands for sGS-ADMM; ``BA" stands for BADMM; ``IBP1" stands for IBP with $\varepsilon=0.1$; ``IBP2" stands for IBP with $\varepsilon=0.01$; ``IBP3" stands for IBP with $\varepsilon=0.001$.}\label{ResTableDS}
\centering \tabcolsep 3pt
\scriptsize{
\begin{tabular}{|lll|ccccc|rrrrrr|}
\hline
$N$ & $m$ & $m'$ &\multicolumn{1}{c}{sGS} &\multicolumn{1}{c}{BA}  &  \multicolumn{1}{c}{IBP1}&\multicolumn{1}{c}{IBP2} &\multicolumn{1}{c|}{IBP3}& \multicolumn{1}{c}{Gurobi} &\multicolumn{1}{c}{sGS} &\multicolumn{1}{c}{BA}  & \multicolumn{1}{c}{IBP1}&\multicolumn{1}{c}{IBP2} &\multicolumn{1}{c|}{IBP3} \\
\hline
&&& \multicolumn{5}{c|}{normalized obj} & \multicolumn{6}{c|}{feasibility} \\
\hline
20 &100&100&1.17e-4&5.54e-5&1.17e+0&7.09e-2&3.95e-2 &1.05e-15&1.40e-5&2.00e-4&3.97e-9&3.92e-8&1.22e-4 \\
20 &200&100&2.45e-4&1.18e-4&1.30e+0&9.98e-2&6.60e-2 &9.60e-16&1.39e-5&2.61e-4&2.68e-9&2.08e-8&3.91e-5 \\
20 &200&200&4.01e-4&1.05e-3&2.21e+0&1.28e-1&4.70e-2 &2.41e-7&1.39e-5&3.07e-4&3.66e-9&2.63e-8&4.29e-5 \\
20 &300&200&4.65e-4&1.53e-3&2.33e+0&1.56e-1&6.61e-2 &1.97e-7&1.41e-5&3.67e-4&2.66e-9&1.08e-8&1.45e-5 \\
50 &100&100&9.85e-5&1.20e-4&1.14e+0&6.40e-2&3.46e-2 &2.03e-7&1.40e-5&2.92e-4&7.61e-9&1.30e-7&1.76e-4 \\
50 &200&100&1.57e-4&1.30e-4&1.24e+0&8.93e-2&5.76e-2 &1.25e-7&1.41e-5&3.99e-4&5.83e-9&8.13e-8&1.01e-4 \\
50 &200&200&2.52e-4&1.29e-3&2.09e+0&1.20e-1&4.22e-2 &1.76e-7&1.41e-5&4.60e-4&4.73e-9&3.63e-8&7.31e-5 \\
50 &300&200&4.02e-4&1.93e-3&2.21e+0&1.41e-1&5.74e-2 &4.34e-7&1.40e-5&5.58e-4&3.81e-9&3.89e-8&3.07e-5 \\
100&100&100&2.12e-4&1.35e-4&1.11e+0&6.24e-2&3.39e-2 &2.48e-7&1.45e-5&3.63e-4&7.56e-9&9.03e-8&2.55e-4 \\
100&200&100&3.32e-4&1.99e-4&1.21e+0&8.65e-2&5.68e-2 &1.89e-7&1.43e-5&5.10e-4&6.16e-9&5.23e-8&1.08e-4 \\
100&200&200&5.15e-4&1.35e-3&2.11e+0&1.21e-1&4.35e-2 &3.42e-7&1.51e-5&5.89e-4&6.12e-9&7.69e-8&8.21e-5 \\
100&300&200&6.56e-4&2.04e-3&2.21e+0&1.40e-1&5.53e-2 &5.14e-7&1.47e-5&7.24e-4&5.00e-9&6.11e-8&3.88e-5 \\
\hline
&&& \multicolumn{5}{c|}{iter} & \multicolumn{6}{c|}{time (in seconds)} \\
\hline
20&100&100 &2595&3000&112&2965&10000  &1.84&3.23&33.27&0.14&3.45&24.91  \\
20&200&100 &2495&3000&107&1761&10000  &6.67&8.48&69.20&0.25&3.85&47.51  \\
20&200&200 &2585&3000&103&2049&10000  &10.56&19.24&139.12&0.50&9.39&98.31  \\
20&300&200 &2465&3000&102&1505&10000  &23.68&28.14&208.91&0.76&10.59&152.04  \\
50&100&100 &2930&3000&112&4440&10000  &9.21&13.18&85.33&0.33&12.30&60.53  \\
50&200&100 &2820&3000&110&2712&10000  &53.21&27.36&175.70&0.68&15.94&127.48  \\
50&200&200 &2900&3000&104&2472&10000  &72.73&56.66&341.90&1.30&29.45&250.09  \\
50&300&200 &2840&3000&103&1850&10000  &299.94&85.01&517.10&1.95&33.35&376.42  \\
100&100&100&2985&3000&117&5398&10000  &9.89&28.92&173.16&0.74&32.72&127.03  \\
100&200&100&2980&3000&110&2937&10000  &31.03&58.46&347.86&1.40&35.72&254.26  \\
100&200&200&3000&3000&105&2730&10000  &63.72&117.03&690.55&2.61&64.81&503.19  \\
100&300&200&3000&3000&102&1923&10000  &3703.33&178.99&1032.84&3.80&68.31&756.29  \\
\hline
\end{tabular}}
\end{table}

\begin{table}[ht]
\setlength{\belowcaptionskip}{7pt}
\captionsetup{width=14.5cm}
\renewcommand\arraystretch{1.15}
\caption{Numerical results on synthetic data for \textbf{Case 2}. In this case, each distribution has different \textit{sparse} weights and different support points. In the table, ``sGS" stands for sGS-ADMM; ``BA" stands for BADMM; ``IBP1" stands for IBP with $\varepsilon=0.1$; ``IBP2" stands for IBP with $\varepsilon=0.01$; ``IBP3" stands for IBP with $\varepsilon=0.001$.}\label{ResTableDS_sparse}
\centering \tabcolsep 2.4pt
\scriptsize{
\begin{tabular}{|llll|ccccc|rrrrrr|}
\hline
$N$ & $m$ & $m'$ & $sr$ &
\multicolumn{1}{c}{sGS} &\multicolumn{1}{c}{BA}  &  \multicolumn{1}{c}{IBP1}&\multicolumn{1}{c}{IBP2}&\multicolumn{1}{c|}{IBP3}& \multicolumn{1}{c}{Gurobi} &\multicolumn{1}{c}{sGS} &\multicolumn{1}{c}{BA}  & \multicolumn{1}{c}{IBP1}&\multicolumn{1}{c}{IBP2}&\multicolumn{1}{c|}{IBP3} \\
\hline
&&&& \multicolumn{5}{c|}{normalized obj} & \multicolumn{6}{c|}{feasibility} \\
\hline
50 &50 &500 &0.1&4.22e-5&1.58e-4&5.52e-1&3.54e-2&2.60e-2 &9.22e-16&1.45e-5&2.67e-4&1.67e-8&5.90e-7&5.46e-4 \\
50 &100&500 &0.2&8.58e-5&1.38e-4&1.14e+0&6.19e-2&3.30e-2 &9.28e-8&1.40e-5&2.92e-4&7.05e-9&6.29e-8&1.76e-4 \\
50 &100&1000&0.1&1.02e-4&1.51e-4&1.16e+0&6.47e-2&3.49e-2 &8.89e-8&1.41e-5&2.76e-4&8.08e-9&7.59e-8&1.57e-4 \\
50 &200&1000&0.2&3.21e-4&1.30e-3&2.12e+0&1.22e-1&4.38e-2 &8.11e-8&1.41e-5&4.65e-4&4.26e-9&3.69e-8&6.65e-5 \\
100&50 &500 &0.1&6.26e-5&9.86e-5&5.62e-1&3.53e-2&2.42e-2 &3.36e-8&1.49e-5&2.96e-4&2.00e-8&2.73e-7&6.47e-4 \\
100&100&500 &0.2&1.93e-4&1.68e-4&1.14e+0&6.08e-2&3.22e-2 &2.36e-15&1.48e-5&3.65e-4&7.97e-9&8.39e-7&2.52e-4 \\
100&100&1000&0.1&1.89e-4&1.56e-4&1.13e+0&6.07e-2&3.15e-2 &1.79e-8&1.46e-5&3.62e-4&9.97e-9&8.65e-7&2.39e-4 \\
100&200&1000&0.2&6.04e-4&1.29e-3&2.12e+0&1.22e-1&4.32e-2 &3.19e-7&1.50e-5&5.84e-4&5.41e-9&7.17e-8&7.40e-5 \\
200&50 &500 &0.1&1.31e-4&9.33e-5&5.63e-1&3.56e-2&2.38e-2 &3.43e-8&1.51e-5&3.54e-4&3.54e-8&8.22e-7&7.21e-4 \\
200&100&500 &0.2&4.20e-4&1.61e-4&1.12e+0&6.01e-2&3.23e-2 &1.06e-7&1.56e-5&4.39e-4&7.80e-9&2.50e-7&3.19e-4 \\
200&100&1000&0.1&3.93e-4&1.65e-4&1.12e+0&6.16e-2&3.29e-2 &1.97e-7&1.57e-5&4.35e-4&1.42e-8&3.25e-7&3.27e-4 \\
200&200&1000&0.2&1.27e-3&1.35e-3&2.09e+0&1.20e-1&4.34e-2 &3.09e-7&1.61e-5&7.25e-4&7.78e-9&2.31e-7&1.12e-4 \\
\hline
&&&& \multicolumn{5}{c|}{iter} & \multicolumn{6}{c|}{time (in seconds)} \\
\hline
50&50&500&0.1   &2850&3000&147&7790&10000  &1.66&1.49&12.78&0.05&2.43&10.46  \\
50&100&500&0.2  &2965&3000&110&3098&10000  &9.19&13.06&83.96&0.33&8.60&60.05  \\
50&100&1000&0.1 &2945&3000&109&4071&10000  &9.13&12.98&84.11&0.32&11.29&59.90  \\
50&200&1000&0.2 &2885&3000&104&2294&10000  &75.44&55.95&337.89&1.29&27.23&249.11  \\
100&50&500&0.1  &2965&3000&137&6915&10000  &1.86&5.64&41.85&0.21&10.25&31.29  \\
100&100&500&0.2 &3000&3000&111&4520&10000  &10.40&28.71&171.21&0.70&27.18&126.31  \\
100&100&1000&0.1&3000&3000&118&5675&10000  &11.01&28.76&171.31&0.74&34.22&126.46  \\
100&200&1000&0.2&3000&3000&104&2985&10000  &63.89&117.93&674.10&2.57&70.33&499.95  \\
200&50&500&0.1  &3000&3000&154&8143&10000  &3.98&13.56&85.42&0.48&24.49&63.71  \\
200&100&500&0.2 &3000&3000&126&5600&10000  &27.66&57.73&339.87&1.60&68.39&254.00  \\
200&100&1000&0.1&3000&3000&116&5764&10000  &31.36&57.75&340.34&1.47&70.34&254.03  \\
200&200&1000&0.2&3000&3000&104&3107&10000  &143.95&224.84&1366.46&5.14&146.97&1010.56  \\
\hline
\end{tabular}}
\end{table}

\begin{table}[ht]
\setlength{\belowcaptionskip}{7pt}
\captionsetup{width=14.5cm}
\renewcommand\arraystretch{1.15}
\caption{Numerical results on synthetic data for \textbf{Case 3}. In this case, each distribution has different \textit{dense} weights, but has the same support points. In the table, ``sGS" stands for sGS-ADMM; ``BA" stands for BADMM; ``IBP1" stands for IBP with $\varepsilon=0.1$; ``IBP2" stands for IBP with $\varepsilon=0.01$; ``IBP3" stands for IBP with $\varepsilon=0.001$.}\label{ResTableSS}
\centering \tabcolsep 3pt
\scriptsize{
\begin{tabular}{|lll|ccccc|rrrrrr|}
\hline
$N$ & $m$ & $m'$ &
\multicolumn{1}{c}{sGS} &\multicolumn{1}{c}{BA}  &  \multicolumn{1}{c}{IBP1}&\multicolumn{1}{c}{IBP2}&\multicolumn{1}{c|}{IBP3}& \multicolumn{1}{c}{Gurobi} &\multicolumn{1}{c}{sGS} &\multicolumn{1}{c}{BA}  & \multicolumn{1}{c}{IBP1}&\multicolumn{1}{c}{IBP2}&\multicolumn{1}{c|}{IBP3} \\
\hline
&&& \multicolumn{5}{c|}{normalized obj} & \multicolumn{6}{c|}{feasibility} \\
\hline
20 &50 &50 &1.68e-4&4.08e-4&1.02e+0&2.23e-2&4.48e-3 &6.79e-16&1.42e-5&2.22e-4&1.28e-8&3.71e-6&1.17e-3 \\
20 &100&100&1.84e-4&4.12e-4&2.12e+0&6.26e-2&1.91e-3 &1.97e-8&1.43e-5&2.92e-4&1.24e-8&1.49e-6&5.11e-4 \\
20 &200&200&8.12e-4&2.80e-3&4.34e+0&1.72e-1&1.59e-3 &2.55e-7&1.40e-5&4.38e-4&4.10e-9&1.25e-6&2.49e-4 \\
50 &50 &50 &9.73e-5&6.26e-4&1.02e+0&2.18e-2&3.84e-3 &4.11e-8&1.63e-5&3.18e-4&1.98e-8&1.18e-5&1.52e-3 \\
50 &100&100&2.47e-4&3.91e-4&2.07e+0&6.04e-2&2.48e-3 &9.32e-8&1.64e-5&4.32e-4&1.25e-8&2.16e-6&8.39e-4 \\
50 &200&200&6.17e-4&2.81e-3&4.23e+0&1.65e-1&1.50e-3 &3.37e-7&1.49e-5&6.46e-4&6.70e-9&1.37e-7&3.77e-4 \\
100&50 &50 &1.39e-4&2.72e-4&1.02e+0&2.15e-2&3.95e-3 &1.17e-7&1.85e-5&4.05e-4&3.16e-8&1.14e-5&1.95e-3 \\
100&100&100&3.85e-4&4.13e-4&2.07e+0&6.00e-2&2.49e-3 &1.88e-7&1.73e-5&5.27e-4&1.08e-8&4.97e-6&1.05e-3 \\
100&200&200&1.06e-3&2.94e-3&4.19e+0&1.63e-1&1.46e-3 &3.79e-7&1.65e-5&8.17e-4&6.30e-9&7.26e-7&4.91e-4 \\
200&50 &50 &2.45e-4&2.87e-4&1.02e+0&2.15e-2&3.65e-3 &5.21e-8&1.90e-5&4.43e-4&1.87e-7&1.31e-5&2.14e-3 \\
200&100&100&7.75e-4&4.08e-4&2.05e+0&5.91e-2&2.59e-3 &6.45e-8&1.81e-5&6.45e-4&1.87e-8&5.66e-6&1.28e-3 \\
200&200&200&2.33e-3&2.96e-3&4.15e+0&1.61e-1&1.34e-3 &3.43e-7&1.73e-5&1.02e-3&8.17e-9&9.32e-7&5.90e-4 \\
\hline
&&& \multicolumn{5}{c|}{iter} & \multicolumn{6}{c|}{time (in seconds)} \\
\hline
20&50&50   &2895&3000&316&8465&10000  &0.29&0.72&5.59&0.03&0.79&5.22  \\
20&100&100 &2925&3000&225&6383&10000  &1.69&3.68&33.69&0.03&0.75&24.68  \\
20&200&200 &2765&3000&157&6037&10000  &9.90&21.28&139.94&0.04&1.11&98.05  \\
50&50&50   &3000&3000&286&9815&10000  &1.34&1.91&13.48&0.04&1.23&10.98  \\
50&100&100 &3000&3000&226&8759&10000  &9.41&14.11&85.93&0.05&1.81&60.24  \\
50&200&200 &2995&3000&161&5603&10000  &74.60&62.07&343.52&0.08&2.17&250.74  \\
100&50&50  &3000&3000&428&9685&10000  &1.98&6.28&42.33&0.09&1.89&31.40  \\
100&100&100&3000&3000&330&9182&10000  &11.35&30.30&173.30&0.13&3.25&126.06  \\
100&200&200&3000&3000&157&7767&10000  &51.47&125.53&685.99&0.14&4.79&501.63  \\
200&50&50  &3000&3000&399&9876&10000  &4.20&13.78&86.35&0.15&3.44&63.13  \\
200&100&100&3000&3000&238&9662&10000  &29.93&58.16&343.43&0.15&5.16&252.98  \\
200&200&200&3000&3000&157&9107&10000  &135.97&225.66&1370.98&0.23&9.10&1003.73  \\
\hline
\end{tabular}}
\end{table}

We next follow \citep[Section 3.4]{cp2016a} to conduct a simple example to visually show the qualities of the barycenter $\bm{w}^*$ and transport plans $\{\Pi^{(t),*}\}$ computed by different algorithms. Consider two one-dimensional continuous Gaussian distributions $\mathcal{N}(\mu_1, \sigma_1^2)$ and $\mathcal{N}(\mu_2, \sigma_2^2)$. It is known from \citep[Section 6.2]{ac2011barycenters} and \citep[Example 1.7]{m1997a} that their 2-Wasserstein barycenter is the Gaussian distribution $\mathcal{N}\left(\frac{\mu_1+\mu_2}{2}, \big{(}\frac{\sigma_1+\sigma_2}{2}\right)^2\big{)}$. Based on this fact, we discretize two Gaussian distributions $\mathcal{N}(-2, \big{(}\frac{1}{4}\big{)}^2)$ and $\mathcal{N}(2, 1)$, and then apply different algorithms to compute their barycenter, which is expected to be close to the discretization of the true barycenter $\mathcal{N}(0, \big{(}\frac{5}{8}\big{)}^2)$. The discretization is performed on the interval $[-4, 5]$ with $n$ uniform grids. Since this part of experiments is not intended for comparing speed, we shall use tighter tolerances, say, $\mathrm{Tol}_{\mathrm{sgs}}=\mathrm{Tol}_{\mathrm{b}}=10^{-6}$ and $\mathrm{Tol}_{\mathrm{ibp}}=10^{-10}$, and set the maximum numbers of iterations for all algorithms to 20000. Figure \ref{1dbarycenters}(a) shows the barycenters computed by different algorithms for $n=500$. From this figure, we see that the barycenter computed by Gurobi oscillates wildly. A similar result has also been observed in \citep[Section 3.4]{cp2016a}. The possible reason for this phenomenon is that the LP \eqref{subpro1} has multiple solutions and Gurobi using a simplex method may not find a ``smooth" one. IBP always finds a ``smooth'' solution thanks to the entropic regularization in the objective. A smaller $\varepsilon$ (say, 0.001) indeed gives a better approximation. On the other hand, our sGS-ADMM and BADMM are also able to find a ``smooth" barycenter, although they are designed to solve the original LP. This could be due to the fact that these two algorithms are developed based on the augmented Lagrangian function or its variants, and they implicitly have a `smoothing' regularization (due to the penalty or proximal term) in each subproblem. In particular, just as IBP with $\varepsilon=0.001$, the barycenter computed by the sGS-ADMM can match the true barycenter almost exactly. We also show the transport plans for $n=500$ in Figure \ref{1dbarycenters}(b). One can see that the transport plans computed by sGS-ADMM are more similar to those computed by Gurobi, while the transport plans computed by IBP are more blurry. Consequently, these two figures clearly demonstrate the superior quality of the solution obtained by our sGS-ADMM.

\begin{figure}[ht]
\captionsetup{width=14cm}
\centering
\subfigure[Distributions and barycenters]{\includegraphics[width=12cm]{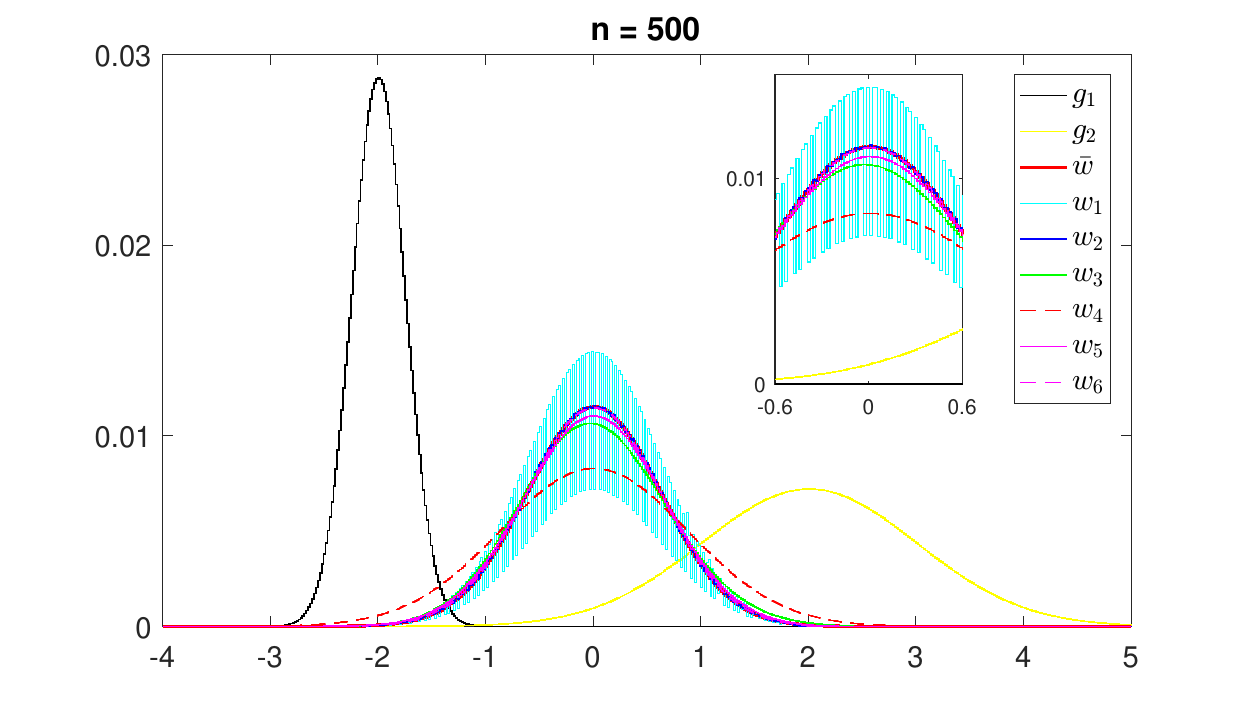}}  \\
\subfigure[Transport plans for $n=500$]{\includegraphics[width=13cm]{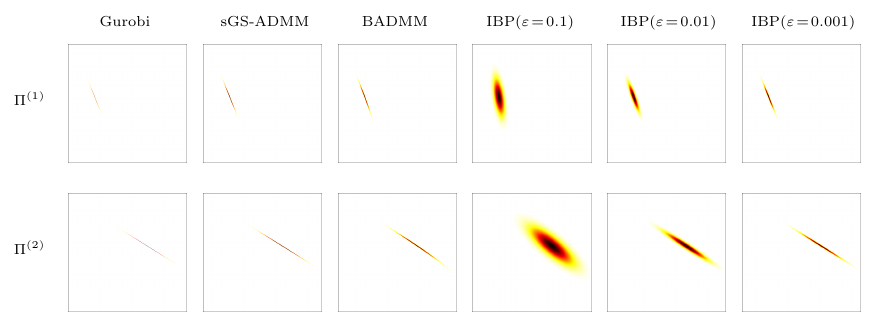}}
\caption{In figure (a): $g_1$ stands for the discretization of $\mathcal{N}(-2, \big{(}\frac{1}{4}\big{)}^2)$; $g_2$ stands for the discretization of $\mathcal{N}(2, 1)$; $\bar{w}$ stands for the discretization of the true barycenter $\mathcal{N}(0, \big{(}\frac{5}{8}\big{)}^2)$ of $g_1$ and $g_2$; $w_1$, $w_2$, $w_3$ stand for the barycenter computed by Gurobi, sGS-ADMM and  BADMM, respectively; $w_4$, $w_5$, $w_6$ stand for the barycenter computed by IBP with $\varepsilon=0.1, 0.01, 0.001$, respectively. The discretization is performed on the interval $[-4, 5]$ with $n$ uniform grids. In figure (b): $\Pi^{(1)}$ (resp. $\Pi^{(2)}$) stands for the transport plan between the barycenter and $g_1$ (resp. $g_2$).}\label{1dbarycenters}
\end{figure}

To further compare the performances of Gurobi and our sGS-ADMM, we conduct more experiments on synthetic data for \textbf{Case 1}, where we fix two of three dimensions $m$, $m'$, $N$ and vary the third one. In this part of experiments, we use $\mathrm{Tol}_{\mathrm{sgs}}=10^{-5}$ to terminate our sGS-ADMM without setting the maximum iteration number. Figure \ref{FigDiff} shows the computational results of the two algorithms over a range of $m$, $m'$ or $N$, and each value is an average over 10 independent trials. From the results, one can see that our sGS-ADMM always returns a similar objective value as Gurobi and has a reasonably good feasibility accuracy. For the computational time, our sGS-ADMM increases approximately linearly with respect to $m$, $m'$ or $N$ individually, while Gurobi increases much more rapidly. This is because the solution methods used in Gurobi (the primal/dual simplex method and the barrier method) are no longer efficient enough and may consume too much memory (due to the Cholesky factorization of a huge coefficient matrix) when the problem size becomes large, although Gurobi already uses a parallel implementation to exploit multiple processors. Moreover, Gurobi may lack robustness, especially for solving large-scale problems. Indeed, as observed from our experiments, the computational times taken by Gurobi can vary a lot among the 10 randomly generated instances, especially $m$, $m'$ or $N$ becomes large. On the other hand, as discussed in Section \ref{sectalg}, the main computational complexity of our sGS-ADMM at each iteration is $\mathcal{O}(Nmm')$. Hence, when two of $m$, $m'$, $N$ are fixed, the total computational cost of our sGS-ADMM is approximately linear with respect to the remaining one, as shown in Figure \ref{FigDiff}. This then highlights another advantage of our method. In addition, although our sGS-ADMM takes advantage of many efficient built-in functions (e.g., matrix multiplication and addition) in {\sc Matlab} that can execute on multiple computational threads, we believe that there is still ample room for improving our sGS-ADMM with a dedicated parallel implementation on a suitable computing platform other than {\sc Matlab}. But we will leave this topic as future research.

\begin{figure}[p]
\centering
\subfigure[$m$ varies with $N=50$ and $m'=100$]{
\begin{minipage}[t]{0.5\textwidth}
\centering
\includegraphics[height=6cm]{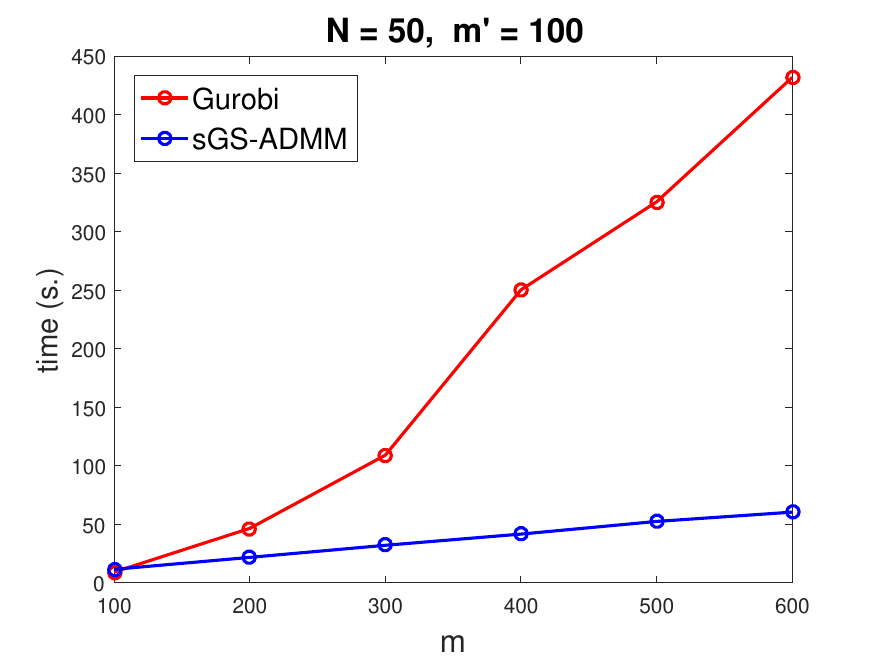}
\end{minipage}
\begin{minipage}[t]{0.45\textwidth}
\vspace{-4.5cm}\scriptsize{
\begin{tabular}{|l|cc|cc|}
\hline
& \multicolumn{2}{c|}{normalized obj} & \multicolumn{2}{c|}{feasibility} \\
\hline
\multicolumn{1}{|c|}{$m$} & {\tiny Gurobi} & {\tiny sGS-ADMM} & {\tiny Gurobi} & {\tiny sGS-ADMM}  \\
\hline
100 & 0 & 9.19e-05 & 2.03e-07 & 1.40e-05  \\
200 & 0 & 1.57e-04 & 1.25e-07 & 1.41e-05  \\
300 & 0 & 3.03e-04 & 1.56e-07 & 1.40e-05  \\
400 & 0 & 3.91e-04 & 1.76e-07 & 1.41e-05  \\
500 & 0 & 5.20e-04 & 1.76e-07 & 1.40e-05  \\
600 & 0 & 5.74e-04 & 1.71e-07 & 1.41e-05  \\
\hline
\end{tabular}}
\end{minipage}}

\subfigure[$m'$ varies with $N=50$ and $m=500$]{
\begin{minipage}[t]{0.5\textwidth}
\centering
\includegraphics[height=6cm]{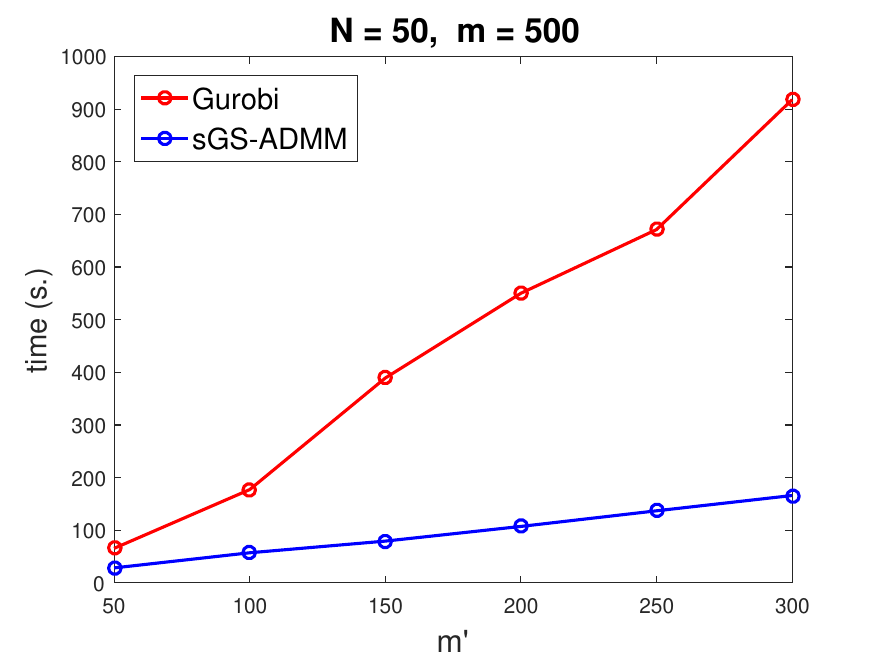}
\end{minipage}
\begin{minipage}[t]{0.45\textwidth}
\vspace{-4.5cm}\scriptsize{
\begin{tabular}{|l|cc|cc|}
\hline
& \multicolumn{2}{c|}{normalized obj} & \multicolumn{2}{c|}{feasibility} \\
\hline
\multicolumn{1}{|c|}{$m'$} & {\tiny Gurobi} & {\tiny sGS-ADMM} & {\tiny Gurobi} & {\tiny sGS-ADMM}  \\
\hline
50  & 0 & 4.14e-04 & 2.54e-10 & 1.40e-05  \\
100 & 0 & 4.85e-04 & 6.60e-10 & 1.40e-05  \\
150 & 0 & 5.81e-04 & 2.56e-10 & 1.41e-05  \\
200 & 0 & 6.38e-04 & 4.14e-10 & 1.42e-05  \\
250 & 0 & 7.64e-04 & 1.41e-11 & 1.41e-05  \\
300 & 0 & 9.64e-04 & 2.35e-11 & 1.41e-05  \\
\hline
\end{tabular}}
\end{minipage}}

\subfigure[$N$ varies with $m=20$ and $m'=10$]{
\begin{minipage}[t]{0.5\textwidth}
\centering
\includegraphics[height=6cm]{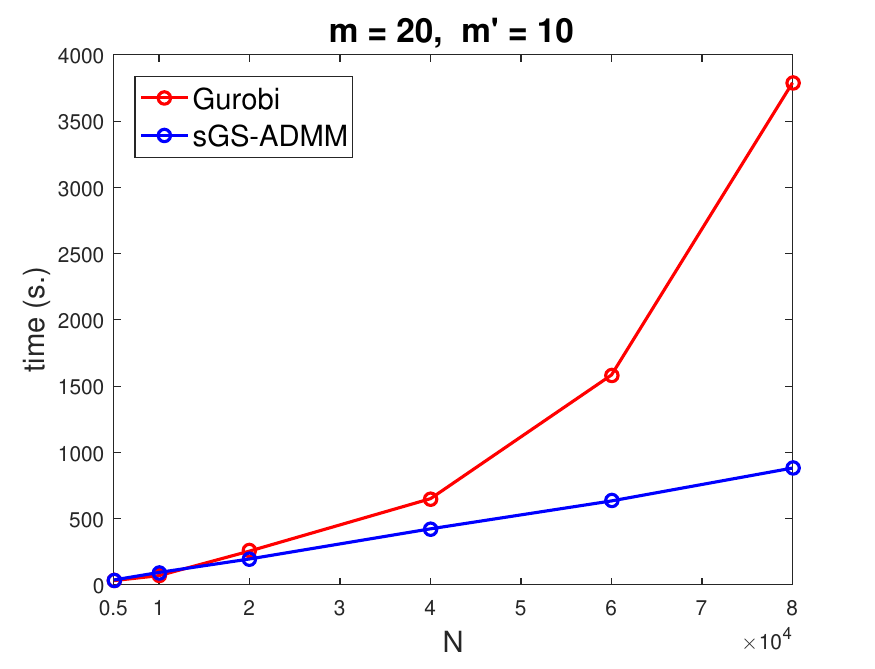}
\end{minipage}
\begin{minipage}[t]{0.45\textwidth}
\vspace{-4.5cm}
\scriptsize{
\begin{tabular}{|l|cc|cc|}
\hline
& \multicolumn{2}{c|}{normalized obj} & \multicolumn{2}{c|}{feasibility} \\
\hline
\multicolumn{1}{|c|}{$N$} & {\tiny Gurobi} & {\tiny sGS-ADMM} & {\tiny Gurobi} & {\tiny sGS-ADMM}  \\
\hline
5000   & 0 & 1.28e-05 & 9.78e-09 & 5.19e-06  \\
10000  & 0 & 1.12e-05 & 2.15e-08 & 4.60e-06  \\
20000  & 0 & 1.03e-05 & 1.39e-08 & 4.19e-06  \\
40000  & 0 & 9.83e-06 & 1.79e-08 & 3.92e-06  \\
60000  & 0 & 9.59e-06 & 1.87e-08 & 3.77e-06  \\
80000  & 0 & 9.35e-06 & 1.92e-08 & 3.78e-06  \\
\hline
\end{tabular}}
\end{minipage}
}
\caption{Comparisons between Gurobi and sGS-ADMM}\label{FigDiff}
\end{figure}

\subsection{Experiments on MNIST}\label{secMNIST}

To better visualize the quality of results obtained by each method, we conduct similar experiments to
\citep[Section 6.1]{cd2014fast} on the MNIST\footnote{Available in \url{http://yann.lecun.com/exdb/mnist/}.} data set \citep{lbbh1998gradient}. Specifically, we randomly select 50 images for each digit ($0\sim9$) and resize each image to $\zeta$ times of its original size of $28 \times 28$, where $\zeta$ is drawn uniformly at random between 0.5 and 2. Then, we randomly put each resized image in a larger $56\times56$ blank image and normalize the resulting image so that all pixel values add up to 1. Thus, each image can be viewed as a discrete distribution supported on grids. We then apply sGS-ADMM, BADMM and IBP with $\varepsilon\in\{0.01,0.001\}$ to compute a Wasserstein barycenter of the resulting images for each digit. The size of barycenter is set to $56\times56$. Note that, since each input image can be viewed as a \textit{sparse} discrete distribution because most of the pixel values are zeros,
one can actually solve a smaller problem \eqref{redsubpro1} to obtain a barycenter; see Remark \ref{rem}. Moreover, for such grid-supported data, an efficient convolutional technique \citep{sgpc2015convolutional} and its stabilized  version \citep[Section 4.1.2]{shbnccps2018wasserstein} have also been used to substantially accelerate IBP and the stabilized IBP, respectively, in our experiments.

The computational results are shown in Figure \ref{figWBSS_con}. One can see that, our sGS-ADMM can provide a clear ``smooth'' barycenter just like IBP with $\varepsilon=0.001$, although it is designed to solve the original LP. This again shows the superior quality of the solution obtained by our sGS-ADMM. Moreover, the results obtained by running sGS-ADMM for 100s are already much better than those obtained by running BADMM for 800s. IBP performs very well on this grid-supported data with smaller $\varepsilon$ leading to sharper barycenters. Here, we would also like to point out that, without the novel convolutional technique, IBP (especially with a small $\varepsilon$) would take much longer time to produce sharper images. Moreover, when using the convolutional technique in IBP, one can no longer take advantage of the sparsity of the distributions and needs to solve the problem on the full grids. This may limit the adoption of the convolutional technique for the case when the distributions are highly sparse (most of weights are zeros) but supported on very dense \textit{or} high dimensional grids. In that case, our sGS-ADMM may be more favorable.

\begin{figure}[p]
\captionsetup{width=14cm}
\centering
\includegraphics[height=19cm]{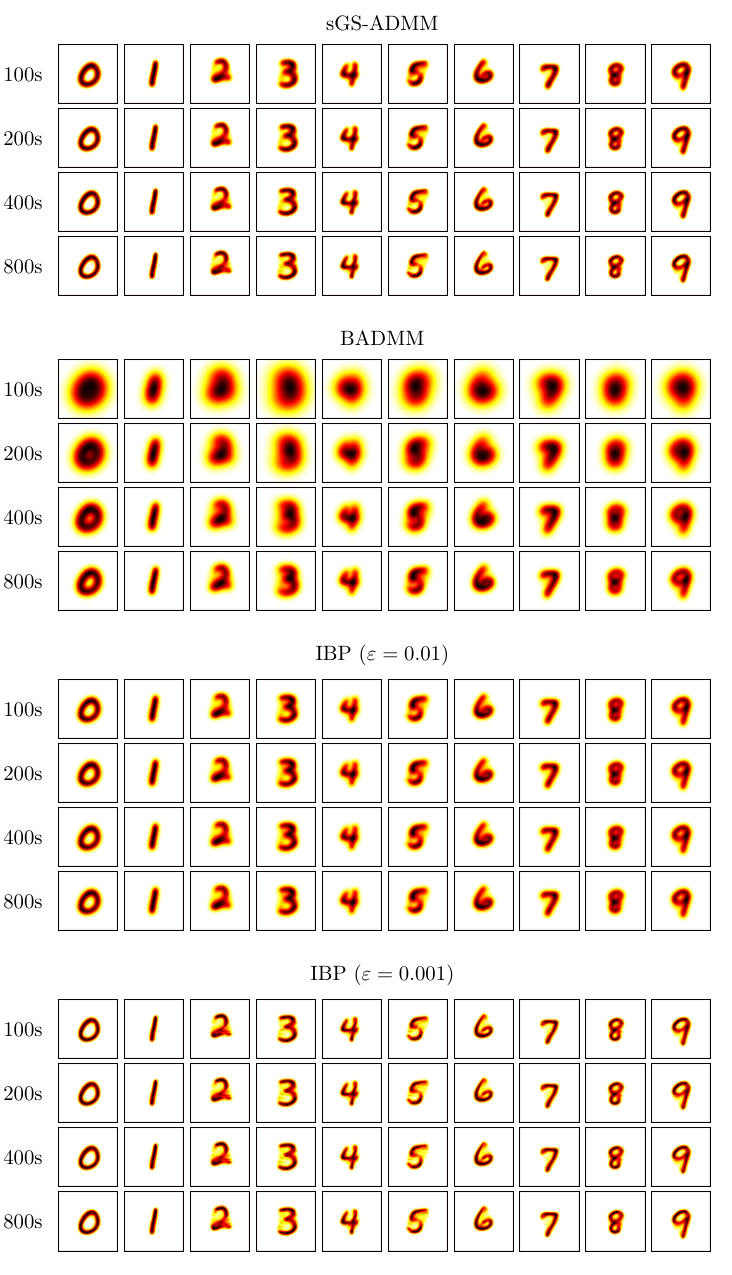}
\caption{Barycenters obtained by running different methods for 100s, 200s, 400s, 800s, respectively. }\label{figWBSS_con}
\end{figure}

\subsection{Experiments for the Free Support Case}\label{secFree}

In this subsection, we briefly compare the performance of different methods used as subroutines in the alternating minimization method for computing a barycenter whose support points are not pre-fixed, i.e., solving problem \eqref{mainpro}; see Remark \ref{RemFree}. The experiments are conducted on the same image data sets\footnote{Available in \url{https://github.com/bobye/d2_kmeans/tree/master/data}} with three different categories (mountains, sky and water) as in \citep{yl2014scaling}. For each category, the data set consists of 1000 discrete distributions and each distribution is obtained by clustering pixel colors of an image in this category \citep[Section 2.3]{lw2008real}. The average number of support points is around 8 and the dimension of each support point is 3. We then compute the barycenter of each data set. The number of support points of the barycenter is chosen as $m=10,\,50$ and the initial $m$ support points are computed as the centroids of clusters obtained by applying $k$-means to all the support points of the given distributions.

The performance of the alternating minimization method for solving problem \eqref{mainpro} naturally depends on the accuracy of the approximate solution obtained for each subproblem (namely, problem \eqref{subpro1}). Basically, a more accurate approximate solution is more likely to guarantee the descent of the objective in problem \eqref{mainpro}, but is also more costly to obtain. Thus, it is nontrivial to design an optimal stopping criterion for the subroutine when solving each subproblem. In our experiments, we simply use the maximum iteration number to terminate the subroutine. For sGS-ADMM and BADMM, we follow \citep{ywwl2017fast} to set the maximum iteration number to 10. For IBP, we set the maximum iteration number to 100 for $\varepsilon=0.01$ and to 1000 for $\varepsilon=0.001$. As observed from our experiments, such maximum numbers are `optimal' for IBP in the sense that an approximate solution having a reasonably good feasibility accuracy can be obtained in less CPU time for most cases. At each iteration, each subroutine is also warm started by the approximate solution obtained in the previous iteration. Finally, we terminate the alternating minimization and return the approximate solution when the relative successive change of the objective in problem \eqref{mainpro} is smaller than $10^{-5}$.

The computational results are reported in Table \ref{ResFreeCase}. One can see that Gurobi performs best in terms of the solution accuracy because it always achieves the lowest objective value and the best feasibility accuracy. However, it is significantly more time-consuming, compared with our sGS-ADMM. In comparison to BADMM and IBP, our sGS-ADMM always returns lower objective values (which are also much closer to those of Gurobi) and better feasibility accuracy within competitive computational time. Similar to the numerical observations in
\citep[Section IV]{ywwl2017fast}, IBP always gives the worse objective values in our experiments. One possible reason is that, due to the entropy regularization, using IBP as the subroutine to approximately solve the subproblem is less likely to ensure the monotonic decrease of the objective values in problem \eqref{mainpro} during the iterations. In our experiments, we have observed that careful tuning of the regularization parameter $\varepsilon$ and intricate adjustment of the corresponding stopping criterion are needed for IBP
to perform well as a subroutine within the alternating minimization method. In view of the above, our sGS-ADMM can be more favorable to be incorporated in the alternating minimization method for handling the free support case. But we should also mention that the computation of barycenters with free supports is still a challenging problem nowadays, which is nonconvex and often presented in large-scale. More in-depth study on applying our sGS-ADMM to this problem is needed and will be left as a future research project.

\begin{table}[ht]
\setlength{\belowcaptionskip}{7pt}
\captionsetup{width=14cm}
\renewcommand\arraystretch{1.1}
\caption{Numerical results for the free support case}\label{ResFreeCase}
\renewcommand\arraystretch{1.15}
\centering \tabcolsep 6pt
\small{
\begin{tabular}{|c|c|ccl|ccl|}
\hline
\multirow{2}{*}{\small{data set}} & \multirow{2}{*}{\small{method}} & \multicolumn{3}{c|}{$m=10$} & \multicolumn{3}{c|}{$m=50$} \\
\cline{3-8}
&& \small{obj} & \small{feasibility} & \small{time(s)} & \small{obj} & \small{feasibility} & \small{time(s)} \\
\hline
\multirow{5}{*}{mountains}
&\small{Gurobi}   &1490.24&1.56e-12&13.46 & 1480.38&5.73e-11&103.39 \\
&\small{sGS-ADMM} &1491.37&2.23e-04&2.25  & 1481.70&1.03e-04&6.09 \\
&\small{BADMM}    &1497.79&3.46e-03&1.21  & 1483.24&5.32e-03&8.73 \\
&\small{IBP($\varepsilon\!=\!0.01$)}
&1509.30&3.77e-04&3.71  & 1503.79&5.84e-04&10.44 \\
&\small{IBP($\varepsilon\!=\!0.001$)}
&1529.99&7.02e-04&37.20 & 1530.69&1.15e-03&311.93 \\
\hline

\multirow{5}{*}{sky}
&\small{Gurobi}   & 1623.42&1.50e-12&14.61  &1612.30&1.26e-12&89.48  \\
&\small{sGS-ADMM} & 1624.60&2.98e-04&2.31   &1614.27&1.12e-04&5.68  \\
&\small{BADMM}    & 1632.14&3.12e-03&1.44   &1633.07&2.66e-02&1.92  \\
&\small{IBP($\varepsilon\!=\!0.01$)}
&1643.13&7.62e-04&2.99  & 1637.36&9.06e-04&7.17\\
&\small{IBP($\varepsilon\!=\!0.001$)}
&1735.32&2.10e-04&26.96 & 1639.74&1.19e-03&244.22 \\
\hline

\multirow{5}{*}{water}
&\small{Gurobi}   & 1620.78&3.87e-13&15.52 & 1611.15&5.05e-11&73.86  \\
&\small{sGS-ADMM} & 1622.20&2.02e-04&2.54  & 1613.16&1.31e-04&4.76  \\
&\small{BADMM}    & 1653.79&8.66e-03&0.27  & 1615.14&1.02e-02&5.17  \\
&\small{IBP($\varepsilon\!=\!0.01$)}
&1644.79&1.34e-04&3.87  & 1635.35&3.77e-04&11.98\\
&\small{IBP($\varepsilon\!=\!0.001$)}
&1734.99&1.40e-04&39.53 & 1646.01&2.05e-03&197.29\\
\hline


\end{tabular}}
\end{table}

\subsection{Summary of Experiments}\label{secNumsum}

From the numerical results reported in the last few subsections, one can see that our sGS-ADMM outperforms the powerful commercial solver Gurobi in terms of the computational time for solving large-scale LPs arising from Wasserstein barycenter problems. Our sGS-ADMM is also much more efficient than another non-regularization-based algorithm BADMM that is designed to solve the primal LP \eqref{repro}. Comparing to IBP on non-grid-supported data, our sGS-ADMM always returns high quality solutions comparable to those obtained by Gurobi but within much shorter computational time. Moreover, our sGS-ADMM is also able to find a ``smooth'' barycenter as in IBP even though we do not modify the LP objective function by adding an entropic regularization. Finally, we would like to emphasize that in contrast to IBP that uses a small $\varepsilon$, our sGS-ADMM does not suffer from numerical instability issues or exceedingly slow convergence speed. Thus, one can easily apply our sGS-ADMM for computing a high quality Wasserstein barycenter without the need to implement sophisticated  stabilization techniques as in the case of IBP.

\section{Concluding Remarks}\label{secconc}

In this paper, we consider the problem of computing a Wasserstein barycenter with pre-specified support points for a set of discrete probability distributions with finite support points. This problem can be modeled as a large-scale linear programming (LP) problem. To solve this LP, we derive its dual problem and then adapt a symmetric Gauss-Seidel based alternating direction method of multipliers (sGS-ADMM) to solve the resulting dual problem. We also establish its global linear convergence without any condition. Moreover, we have designed the algorithm so that all the subproblems involved can be solved exactly and efficiently in a distributed fashion. This makes our sGS-ADMM highly suitable for computing a Wasserstein barycenter on a large data set. Finally, we have conducted detailed numerical experiments on synthetic data sets and image data sets to illustrate the efficiency of our method.

\acks{The authors are grateful to the editor and the anonymous referees for their valuable suggestions and comments, which have helped to improve the quality of this paper. The research of Defeng Sun was supported in part by a start-up research grant from the Hong Kong Polytechnic University. The research of Kim-Chuan Toh was supported in part by the Ministry of Education, Singapore, Academic Research Fund (Grant No. R-146-000-256-114).}

\appendix

\section{An iterative Bregman projection method}\label{apdIBP}

The iterative Bregman projection (IBP) method was adapted by \cite{bccnp2015iterative} to solve the following problem, which introduces an entropic regularization in the original LP \eqref{subpro1}:
\begin{equation}\label{entropypro}
\begin{aligned}
&\min\limits_{\bm{w},\,\{\Pi^{(t)}\}}~{\textstyle\frac{1}{N}\sum^N_{t=1}}\big{(}\langle D^{(t)}, \,\Pi^{(t)} \rangle - \varepsilon E_t(\Pi^{(t)})\big{)}  \\
&\hspace{0.5cm}\mathrm{s.t.} \hspace{0.5cm} \Pi^{(t)}\bm{e}_{m_t} = \bm{w}, ~(\Pi^{(t)})^{\top}\bm{e}_{m} = \bm{a}^{(t)},~\Pi^{(t)} \geq 0, ~~\forall\, t = 1, \cdots, N, \\
&\hspace{1.5cm} \bm{e}^{\top}_m\bm{w} = 1, ~\bm{w} \geq 0,
\end{aligned}
\end{equation}
where the entropic regularization $E_t(\Pi^{(t)})$ is defined as $E_t(\Pi^{(t)})=-\sum^m_{i=1}\sum^{m_t}_{j=1}\pi^{(t)}_{ij}(\log(\pi^{(t)}_{ij})$ $-1)$ for $t=1,\cdots,N$ and $\varepsilon>0$ is a regularization parameter. Let $\Xi_t=\exp(-D^{(t)}/\varepsilon) \in \mathbb{R}^{m\times m_t}$ for $t=1,\cdots,N$. Then, it follows from \citep[Remark 3]{bccnp2015iterative} that IBP for solving \eqref{entropypro} is given by
\begin{equation}\label{iterschemeIBP}
\begin{aligned}
\bm{u}^{(t),k+1} &= \bm{w}^k./\big{(}\Xi_t\bm{v}^{(t),k}\big{)},  \quad t = 1, \cdots, N,  \\
\bm{v}^{(t),k+1} &= \bm{a}^{(t)}./\big{(}\Xi_t^{\top}\bm{u}^{(t),k+1}\big{)}, \quad t = 1, \cdots, N,  \\
\Pi^{(t),k+1} &= \mathrm{Diag}(\bm{u}^{(t),k+1})\,\Xi_t\,\mathrm{Diag}(\bm{v}^{(t),k+1}), \quad t = 1, \cdots, N,
\\
\bm{w}^{k+1} &= \left({\textstyle\prod^N_{t=1}}\big(\bm{u}^{(t),k+1}\odot(\Xi_t\bm{v}^{(t),k+1})\big)\right)^{\frac{1}{N}},
\end{aligned}
\end{equation}
with $\bm{w}^0=\frac{1}{m}\bm{e}_m$ and $\bm{v}^{(t),0}=\bm{e}_{m_t}$ for $t=1,\cdots,N$, where $\mathrm{Diag}(\bm{x})$ denotes the diagonal matrix with the vector $\bm{x}$ on the main diagonal, ``$./$" denotes the entrywise division and ``$\odot$" denotes the entrywise product. Note that the main computational cost in each iteration of the above iterative scheme is $\mathcal{O}(m\sum^N_{t=1}m_t)$. Moreover, when all distributions have the same $m'$ support points, IBP can be implemented highly efficiently with a $\mathcal{O}((m+m')N)$ memory complexity, while sGS-ADMM and BADMM still require $\mathcal{O}(mm'N)$ memory. Specifically, in this case, IBP can avoid forming and storing the large matrix $[\Xi_1,\cdots,\Xi_N]$ (since each $\Xi_t$ is the same) to compute $\Xi_t\bm{v}^{(t),k}$ and $\Xi_t^{\top}\bm{u}^{(t),k+1}$. Thus, IBP can reduce much computational cost and take less time at each iteration. This advantage can be seen in Table \ref{ResTableSS} for $\varepsilon\in\{0.1,0.01\}$. However, we should be mindful that IBP only solves problem \eqref{entropypro} to obtain an approximate solution of the original problem \eqref{subpro1}. Although a smaller $\varepsilon$ can give a better approximation, IBP may become numerically unstable when $\varepsilon$ is too small; see
\citep[Section 1.3]{bccnp2015iterative} for more details. To alleviate this numerical instability, one may carry out the computations in \eqref{iterschemeIBP} in the log domain and use the \textit{log-sum-exp} stabilization trick to avoid underflow/overflow for small values of $\varepsilon$; see \citep[Section 4.4]{pc2019computational} for more details. Specifically, by taking logarithm on both sides of the equations in \eqref{iterschemeIBP} and letting $\tilde{\bm{u}}^{(t),k}:=\varepsilon\log(\bm{u}^{(t),k})$, $\tilde{\bm{v}}^{(t),k}:=\varepsilon\log(\bm{v}^{(t),k})$, $\tilde{\bm{w}}^{(t),k}:=\varepsilon\log(\bm{w}^{(t),k})$ and
$\tilde{\bm{a}}^{(t)}:=\varepsilon\log(\bm{a}^{(t)})$, we obtain after some manipulations that
\begin{equation}\label{iterschemeIBPlog}
\begin{aligned}
\tilde{\bm{u}}^{(t),k+1}
&= \tilde{\bm{w}}^k
+ \tilde{\bm{u}}^{(t),k} - {\textstyle\varepsilon\log\left(\!\left[\sum\limits^{m_t}_{j=1}
\exp\!\left(\frac{\tilde{u}^{(t),k}_i+\tilde{v}^{(t),k}_j-D^{(t)}_{ij}}{\varepsilon}\right)\!\right]_i\right)}, ~~ t = 1, \cdots, N, \\
\tilde{\bm{v}}^{(t),k+1} &= \tilde{\bm{a}}^{(t)}
+ \tilde{\bm{v}}^{(t),k} - {\textstyle\varepsilon\log\left(\!\left[\sum\limits^{m}_{i=1}
\exp\!\left(\frac{\tilde{u}^{(t),k+1}_i+\tilde{v}^{(t),k}_j-D^{(t)}_{ij}}{\varepsilon}\right)\!\right]_j\right)}, ~~ t = 1, \cdots, N, \\
\Pi^{(t),k+1} &= {\textstyle\exp\!\left(\frac{\tilde{\bm{u}}^{(t),k+1}\bm{e}^{\top}_{m_t}
+\bm{e}_m(\tilde{\bm{v}}^{(t),k+1})^{\top}-D^{(t)}}{\varepsilon}\right)}, ~~ t = 1, \cdots, N,\\
\tilde{\bm{w}}^{k+1} &= \frac{\varepsilon}{N}{\textstyle\sum\limits^N_{t=1}\log\left(\!\left[\sum\limits^{m_t}_{j=1}
\exp\!\left(\frac{\tilde{u}^{(t),k+1}_i+\tilde{v}^{(t),k+1}_j-D^{(t)}_{ij}}{\varepsilon}\right)\!\right]_i\right)}, \\
\end{aligned}
\end{equation}
where $\tilde{\bm{w}}^0=\varepsilon\log(\frac{1}{m}\bm{e}_m)$ and $\tilde{\bm{u}}^{(t),0}=0$, $\tilde{\bm{v}}^{(t),0}=0$ for $t=1,\cdots,N$. After obtaining $\tilde{\bm{w}}^{k+1}$, one can recover $\bm{w}^{k+1}$ by setting $\bm{w}^{k+1}:=\exp\big{(}\tilde{\bm{w}}^{k+1}/\varepsilon\big{)}$. In contrast to \eqref{iterschemeIBP}, the log-domain iterations \eqref{iterschemeIBPlog} is more stable for a small $\varepsilon$. However, at each step, \eqref{iterschemeIBPlog} requires additional exponential operations that are typically time-consuming. It also loses some computational efficiency in replacing the matrix-vector multiplications (which can take advantage of the multiprocessing capability in {\sc Matlab}'s Intel Math Kernel Library) in \eqref{iterschemeIBP}
by the log-sum-exp operations. Hence, iterations \eqref{iterschemeIBPlog} can be much less efficient than iteration \eqref{iterschemeIBP} in computation. This issue has also been discussed in \citep[Remark 4.23]{pc2019computational}. Moreover, when $\varepsilon$ is small, the convergence of IBP can become quite slow. In our experiments, we use \eqref{iterschemeIBP} for $\varepsilon\in\{0.1,0.01\}$ and use \eqref{iterschemeIBPlog} for $\varepsilon=0.001$.

\section{A modified Bregman ADMM}\label{apdbadmm}

The Bregman ADMM (BADMM) was first proposed by \cite{wb2014bregman} and then was adapted to solve \eqref{subpro1} by \cite{ywwl2017fast}. For notational simplicity, let
\begin{equation*}
\begin{aligned}
\mathcal{C}_1 &:= \{(\Pi^{(1)}, \cdots, \Pi^{(N)})\,:\,(\Pi^{(t)})^{\top}\bm{e}_m = \bm{a}^{(t)}, ~\Pi^{(t)} \geq 0, ~t = 1, \cdots, N\},  \\
\mathcal{C}_2 &:= \{(\Gamma^{(1)}, \cdots, \Gamma^{(N)}, \bm{w})\,:\,\bm{w} \in \Delta_m, ~\Gamma^{(t)}\bm{e}_{m_t} = \bm{w}, ~\Gamma^{(t)} \geq 0, ~t=1,\cdots,N \}.
\end{aligned}
\end{equation*}
Then, problem \eqref{subpro1} can be equivalently rewritten as
\begin{equation}\label{subpro2}
\begin{aligned}
&\min\limits_{\{\Pi^{(t)}\},\,\{\Gamma^{(t)}\},\,\bm{w}}~{\textstyle \sum^N_{t=1}} \langle D^{(t)}, \,\Pi^{(t)} \rangle  \\
&\hspace{0.9cm} \mathrm{s.t.} \hspace{1cm} \Pi^{(t)} = \Gamma^{(t)}, \quad t = 1, \cdots, N, \\
&\hspace{2.4cm} (\Pi^{(1)},\cdots,\Pi^{(N)})\in\mathcal{C}_1, \quad (\Gamma^{(1)},\cdots,\Gamma^{(N)},\bm{w})\in\mathcal{C}_2.
\end{aligned}
\end{equation}
The iterative scheme of BADMM for solving \eqref{subpro2} is given by
\begin{equation*}
\left\{\begin{aligned}
&(\Pi^{(1),k+1},\cdots,\Pi^{(N),k+1}) = \mathop{\mathrm{argmin}}
\limits_{(\Pi^{(1)},\cdots,\Pi^{(N)})\in\mathcal{C}_1}\left\{\sum^N_{t=1}\left(\langle D^{(t)}, \,\Pi^{(t)} \rangle + \langle \Lambda^{(t),k}, \,\Pi^{(t)}\rangle + \rho\mathbf{KL}(\Pi^{(t)}, \,\Gamma^{(t),k})\right)\right\},  \\
&(\Gamma^{(1),k+1},\cdots,\Gamma^{(N),k+1},\bm{w}^{k+1}) = \mathop{\mathrm{argmin}}
\limits_{(\Gamma^{(1)},\cdots,\Gamma^{(N)},\bm{w})\in\mathcal{C}_2}\left\{\sum^N_{t=1}\left(-\langle\Lambda^{(t),k}, \,\Gamma^{(t)}\rangle + \rho\mathbf{KL}(\Gamma^{(t)}, \,\Pi^{(t),k+1})\right)\right\},\\
&\Lambda^{(t),k+1} = \Lambda^{(t),k} + \rho(\Pi^{(t),k+1} - \Gamma^{(t),k+1}), \quad t = 1,\cdots,N,
\end{aligned}\right.
\end{equation*}
where $\mathbf{KL}(\cdot, \cdot)$ denotes the KL divergence defined by $\mathbf{KL}(A, B)=\sum_{ij}a_{ij}\ln(\frac{a_{ij}}{b_{ij}})$ for any two matrices $A$, $B$ of the same size. The subproblems in above scheme have closed-form solutions; see \citep[Section III.B]{ywwl2017fast} for more details. Indeed, at the $k$-th iteration,
\begin{equation*}
\begin{aligned}
\bm{u}^{(t),k} &= \left(\frac{a^{(t)}_{j}}{(\Gamma^{(t),k}_{:j})^{\top}\exp(-{\textstyle\frac{1}{\rho}} D^{(t)}_{:j} - {\textstyle\frac{1}{\rho}}\Lambda^{(t),k}_{:j})}\right)_{j=1,\cdots,m_t}, \quad t = 1, \cdots, N, \\
\Pi^{(t),k+1} &= \left(\Gamma^{(t),k} \odot \exp(-{\textstyle\frac{1}{\rho}} D^{(t)} - {\textstyle\frac{1}{\rho}}\Lambda^{(t),k})\right)\mathrm{Diag}(\bm{u}^{(t),k}), \quad t = 1, \cdots, N, \\
\tilde{\bm{w}}^{(t),k+1} &= \left((\Pi^{(t),k+1}_{i:})^{\top}\exp({\textstyle\frac{1}{\rho}}\Lambda^{(t),k}_{i:})\right)_{i=1,\cdots,m}, \quad t = 1, \cdots, N, \\
\bm{w}^{k+1} &= \left({\textstyle\prod^N_{t=1}}\tilde{\bm{w}}^{(t),k+1}\right)^{\frac{1}{N}}\Big{/}
\left(\bm{e}_m^{\top}\left({\textstyle\prod^N_{t=1}}\tilde{\bm{w}}^{(t),k+1}\right)^{\frac{1}{N}}\right), \\
\bm{v}^{(t),k+1} &= \left(\frac{w^{k+1}_{i}}{(\Pi^{(t),k+1}_{i:})^{\top}\exp({\textstyle\frac{1}{\rho}}\Lambda^{(t),k}_{i:})}\right)_{i=1,\cdots,m}, \quad t = 1, \cdots, N, \\
\Gamma^{(t),k+1} &= \mathrm{Diag}(\bm{v}^{(t),k+1})\left(\Pi^{(t),k+1} \odot \exp({\textstyle\frac{1}{\rho}}\Lambda^{(t),k})\right), \quad t = 1, \cdots, N.
\end{aligned}
\end{equation*}
Moreover, in order to avoid computing the geometric mean $(\prod^N_{t=1}\tilde{\bm{w}}^{(t),k+1})^{\frac{1}{N}}$ for updating $\bm{w}^{k+1}$, \cite{ywwl2017fast} actually use one of the following heuristic rules to update $\bm{w}^{k+1}$:
\begin{equation*}
\begin{aligned}
(\mathrm{R1}) \qquad \bm{w}^{k+1} &= \left({\textstyle\sum^N_{t=1}}\tilde{\bm{w}}^{(t),k+1}\right)\Big{/}
\left(\bm{e}_m^{\top}\left({\textstyle\sum^N_{t=1}}\tilde{\bm{w}}^{(t),k+1}\right)\right), \\
(\mathrm{R2}) \qquad \bm{w}^{k+1} &= \left({\textstyle\sum^N_{t=1}}\sqrt{\tilde{\bm{w}}^{(t),k+1}}\right)^2\Big{/}
\left(\bm{e}_m^{\top}\left({\textstyle\sum^N_{t=1}}\sqrt{\tilde{\bm{w}}^{(t),k+1}}\right)^2\right).
\end{aligned}
\end{equation*}
In their {\sc Matlab} codes, (R2) is the default updating rule. The main computational complexity without considering the exponential operations in BADMM is $\mathcal{O}(m\sum^N_{t=1}m_t)$. For the exponential operations at each step,
the practical computational cost could be a few times more than the  previous cost of $\mathcal{O}(m\sum^N_{t=1}m_t)$.

\vskip 0.2in
\bibliography{Ref_WB}

\end{document}